\newtheorem{thm}{Theorem} [section]
\newtheorem{lem}[thm]{Lemma}
\newtheorem{prop}[thm]{Proposition}
\theoremstyle{definition}
\newtheorem{definition}[thm]{Definition}
\newtheorem{example}[thm]{Example}
\theoremstyle{remark}
\newtheorem{rem}[thm]{Remark}
\numberwithin{equation}{section}
\newcommand{\nc}{\newcommand}
 \nc{\Z}{{\mathbb Z}}
 \nc{\C}{{\mathbb C}}
 \nc{\N}{{\mathbb N}}
 \nc{\F}{{\mf F}}
 \nc{\Q}{\mathbb{Q}}
 \nc{\la}{\lambda}
 \nc{\ep}{\epsilon}\nc{\End}{\text{End}}
 \nc{\h}{\mathfrak h}
 \nc{\mc}{\mathcal}
 \nc{\LL}{\texttt{L}}
 \nc{\HBm}{\mathcal{H}_{m}}
 \nc{\inv}{^{-1}}
 \nc{\qu}{\quad}
 \nc{\WBm}{W_{B_m}}
 \nc{\U}{\bold{U}}
 \nc{\Uj}{\U^{\jmath}}
 \nc{\Ui}{\U^{\imath}}
 \nc{\ibar}{\psi_{\imath}}
 \nc{\jbar}{\psi_{\jmath}}
 \nc{\I}{\mathbb{I}}
 \nc{\be}{e}
 \nc{\bff}{f}
 \nc{\bk}{k}
 \nc{\bt}{t}
 \nc{\VV}{\mathbb V}
 \nc{\Ii}{I}
 \nc{\hf}{\frac{1}{2}}
 \nc{\Ij}{I}
 \nc{\hgt}{\text{ht}}
\title[Multiparameter quantum Schur duality of type B]{Multiparameter quantum Schur duality of type B}
 \author[Bao]{Huanchen Bao}
\address{Department of Mathematics, University of Maryland, College Park, MD 20742}
\email{huanchen@math.umd.edu}
\author[Wang]{Weiqiang Wang}
\address{Department of Mathematics, University of Virginia, Charlottesville, VA 22904}
\email{ww9c@virginia.edu}
\author[Watanabe]{Hideya Watanabe}
\address{Department of Mathematics, Tokyo Institute of Technology, 2-12-1 Oh-okayama, Meguro-ku, Tokyo 152-8551, Japan}
\email{watanabe.h.at@m.titech.ac.jp}
\subjclass[2010]{Primary~17B10}
\keywords{}
\date{}
\begin{document}
\maketitle

\begin{abstract}
We establish a Schur type duality between a coideal subalgebra of the quantum group of type A 
and the Hecke algebra  of type B with 2 parameters. 
We identify the $\imath$-canonical basis on the tensor product of the natural representation with 
Lusztig's  canonical basis of the type B Hecke algebra with unequal parameters associated to a weight function. 
\end{abstract}

\section{Introduction}

\subsection{}
Jimbo (\cite{J86}) established a quantum Schur duality between the quantum group $\U$ of type A
and the Hecke algebra $\mathcal{H}_{A_{m-1}}$, which asserts that their actions on the tensor product $\VV^{\otimes m}$
of the natural representation $\VV$ of $\U$ commute and form double centralizers. 
To facilitate further discussions, we take the base field to be $\Q(p,q)$ for two parameters $p,q$ instead of $\Q(q)$. 

Let $\HBm$ be the Hecke algebra of type $B_m$ with two parameters $p,q$, which contains
$\mathcal{H}_{A_{m-1}}$ as a subalgebra and admits one extra generator $H_0$; see \eqref{eq:HB} for the definition. 
The Hecke algebra $\HBm$ acts naturally on $\VV^{\otimes m}$ as well, where $H_0$ acts on the first tensor factor only. 
On the other hand, there is a notion of quantum symmetric pair, $(\U, \Ui)$, where $\Ui$ is a coideal subalgebra of $\U$.
The algebra $\Ui$ allows for some freedom of choices of parameters; see Letzter \cite{Le99} (also see \cite{BK15}). We make a particular choice of the parameters for $\Ui$ in this paper depending on $p$ and $q$. The coideal subalgebra $\Ui$ acts on $\VV^{\otimes m}$ naturally.

Our first main result ($\imath$-Schur duality) asserts that the actions of $\Ui$ and $\HBm$ on $\VV^{\otimes m}$ commute and form double centralizers. 
This double centralizer theorem was established by the first two authors in \cite{BW13}
 in the specialization when $p=q$,
and then by the first author in \cite{B16} in the specialization when $p=1$ and $q$ is generic. 
The multiparameter $(\Ui, \HBm)$-duality in this paper is a natural generalization
and synthesis of these earlier cases.

\subsection{}

Let us return to the setting of the type A Schur-Jimbo duality for a moment.
The space $\VV^{\otimes m}$ admits a (parabolic) Kazhdan-Lusztig basis via
its identification as a direct sum of permutation modules for the Hecke algebra $\mathcal{H}_{A_{m-1}}$.  
By the work of Lusztig \cite{L94}, there exists a
canonical basis on the tensor product $\U$-module $\VV^{\otimes m}$.
It is well known that the canonical basis and the Kazhdan-Lusztig basis on $\VV^{\otimes m}$ coincide 
(cf. Frenkel-Khovanov-Kirillov \cite{FKK}).

Let $\LL: W_{B_m} \rightarrow \Z$ be a weight function on the Weyl group of type B; cf. \cite{L03}. 
Via a homomorphism $v^\LL : \Q(p,q) \rightarrow \Q(v)$ with an indeterminate $v$,
we consider the specializations $\HBm^\LL$, $\U_\LL$, $\Ui_\LL$ and $\VV^{\otimes m}_\LL$ over $\Q(v)$. 
The multiparameter $(\Ui, \HBm)$-duality leads to the $(\Ui_\LL, \HBm^\LL)$-duality on $\VV^{\otimes m}_\LL$ under such a specialization.

Lusztig \cite{L03} constructed a distinguished bar invariant basis of $\HBm^\LL$ (called an $\LL$-basis in this paper), 
which specializes to the Kazhdan-Lusztig ($\texttt{KL}$) basis (\cite{KL79}) when $\LL$ is the length function $\ell $ of a Weyl group. It is straightforward to adapt
Lusztig's construction to the parabolic setting; cf. \cite{Deo87}. 
Thus $\VV^{\otimes m}_\LL$ admits an $\LL$-basis through its identification as a direct sum of permutation modules over $\HBm^\LL$.

An $\imath$-canonical basis on a tensor product $\U$-module (for example, $\VV^{\otimes m}$) when $p=q$
was constructed in \cite{BW13},  which is invariant with respect to a new bar involution introduced therein.
Moreover,  the $\imath$-canonical basis on $\VV^{\otimes m}$ when $p=q$ 
(which corresponds to the case when $\LL =\ell$) is identified with the type B $\texttt{KL}$-basis.
An easy modification of the construction {\em loc. cit.}  leads to an $\imath$-canonical basis on $\VV^{\otimes m}_\LL$, for any weight function $\LL$. Note that the $\imath$-canonical basis on $\VV^{\otimes m}_\LL$ depends on $\LL$, since the algebra $\Ui_\LL$ depends on $\LL$. 
We refer to \cite{BW16} for a general theory of $\imath$-canonical bases for  quantum symmetric pairs with parameters.

The second main result of this paper is that the $\imath$-canonical basis on $\VV^{\otimes m}_\LL$ 
coincides with the $\LL$-basis on $\VV^{\otimes m}_\LL$, for any weight function $\LL$. 
For another distinguished choice of $\LL$ (which corresponds to taking $p=1$), the $\LL$-basis (which is also the $\imath$-canonical basis)
is the \texttt{KL}-basis of type D; see \cite{B16}. 

\subsection{}

The constructions and proofs in this paper are mostly adapted from \cite{BW13}, some of which has been known to us for some time. 
Nevertheless, the new setting does require various new nontrivial 2-parameter formulas,
and thus we present explicitly the precise details which are new in our setting. 
The detailed constructions in this paper (where $\U$ is the quantum group for $\mathfrak{sl}_k$) 
depend much on the parity of $k$, so we treat the two cases separately. 
In Sections~\ref{sec:Ui} and \ref{sec:CBLB}, we treat the case when $k=2r+2$ is even. 
We establish in Section~\ref{sec:Ui} the $(\Ui, \HBm)$-duality on $\VV^{\otimes m}$.
In Section~\ref{sec:CBLB}, we study the $\imath$-canonical basis  on $\VV^{\otimes m}_\LL$
associated to a weight function $\LL$, and show it coincides with the $\LL$-basis.
In Section~\ref{sec:Uj} we present the analogous constructions when $k=2r+1$ is odd.  
\\

\noindent {\bf Acknowledgements.} HB is partially supported by an AMS-Simons travel grant,
and he thanks Max Planck Institute
for Mathematics for support which facilitated this collaboration. WW is partially supported by the NSF grant DMS-1405131, 
and he thanks George Lusztig for helpful discussions on \cite{L03}.

\section{The $\imath$-Schur duality with 2 parameters}
 \label{sec:Ui}
 
In this section, we establish a Schur-type duality between a coideal subalgebra $\Ui$ 
of the quantum group for $\mathfrak{sl}_{2r+2}$
and the Hecke algebra of type $B$ in 2 parameters.  

\subsection{The quantum symmetric pair $(\U,\Ui)$}\label{Ui}

Let $r\ge 0$ be an integer. We set
\[
\I= \{-r, -r+1, \dots, r\},
\qquad
\I^\imath = \{1, \dots, r\}.
\]
Let $\Pi= \big \{\alpha_i=\varepsilon_{i-\hf}-\varepsilon_{i+\hf} \mid i \in \I \big \}$
be the simple system of type $A_{2r+1}$, and $\Phi$ the associated root system. Denote the weight lattice by 
$$
\Lambda = \sum_{{i \in \I} } \big( \Z \varepsilon_{i - \hf} + \Z \varepsilon_{i + \hf} \big). 
$$  

Let $p$ and $q$ be indeterminates. 
Let $\U_q(\mathfrak{sl}_{2r+2})$ denote the quantum group of type $A_{2r + 1}$ over $\Q(q)$ 
with the standard generators $E_{i}$, $F_{i}$ and $K^{\pm 1}_{i}$ for $i \in \I$ (see, e.g., \cite[\S1.2]{BW13} for a precise definition). 
Let 
\[\U = \U_q(\mathfrak{sl}_{2r+2}) \otimes_{\Q(q)} \Q(p,q).
\]
We denote by $\psi$ the bar involution on $\U$ (more conventionally denoted by $\bar{\phantom{x}}$ ), that is, a $\Q$-algebra involution of $\U$ such that 
\begin{equation}  \label{eq:psi}
\psi(q) =q^{-1},\quad \psi(p) =p^{-1},  \quad \psi(E_{i}) = E_{i},  \quad\psi(F_{i}) = F_{i},  \quad \psi(K_{i})= K_{i}^{-1}.
\end{equation}
We shall use the comultiplication $\Delta: \U \rightarrow \U \otimes \U$ as follows:
\begin{align}
\Delta(E_{i}) = 1 \otimes E_{i} + E_{i} \otimes K_{i}\inv, \;
 \Delta(F_{i}) = F_{i} \otimes 1 + K_{i} \otimes F_{i}, \;
  \Delta(K_{i}) = K_{i} \otimes K_{i}. 
 \label{eq:Delta}
\end{align}

We review  the quantum symmetric pair $(\U,\Ui)$ over $\Q(p,q)$  \cite{Le99} 
with the following Satake diagram:

\begin{center}
\begin{tikzpicture}
\draw (-2,0) node {$A_{2r+1}:$};
 \draw[dotted]  (0.1,0) node[below] {$\alpha_{-r}$} -- (1.9,0) node[below] {$\alpha_{-1}$} ;
 \draw (2.1,0) -- (2.9,0) node[below]  {$\alpha_{0}$};
 \draw (3.1,0)-- (3.9,0) node[below] {$\alpha_{1}$};
 \draw[dotted] (4.1,0) -- (5.9,0) node[below] {$\alpha_{r}$} ;
\draw (0,0) node (-r) {$\circ$};
 \draw (2,0) node (-1) {$\circ$};
\draw (3,0) node (0) {$\circ$};
\draw (4,0) node (1) {$\circ$}; 
\draw (6,0) node (r) {$\circ$};
\draw[<->] (-r.north east) .. controls (3,1.5) .. node[above] {$\tau$} (r.north west) ;
\draw[<->] (-1.north) .. controls (3,1) ..  (1.north) ;
\draw[<->] (0) edge[<->, loop above] (0);
\end{tikzpicture}
\end{center}
Let $\Ui$ be the $\Q(p,q)$-subalgebra of $\U$ generated by (for $i \in \{1, \dots, r\}$):
 \begin{align}
  \label{eq:embedi}
 \begin{split}
&\bk_{i} = K_{i}K^{-1}_{{-i}}, \quad   \bt = E_{0} +qF_{0}K^{-1}_{0} + \frac{p-p^{-1}}{q-q^{-1}} K_{0}^{-1},  \\
&\be_{i} =  E_{i} + F_{{-i}}K^{-1}_{i}, \quad
\bff_{i} = E_{{-i}} + K^{-1}_{{-i}} F_{i}. 
\end{split}
\end{align}
Note that $\Ui$ is a right coideal subalgebra of $\U$, that is, we have $\Delta(\Ui) \subset \Ui \otimes \U$.

The algebra $\Ui$ has a presentation as an algebra over $\Q(p,q)$ generated by $\be_{i}$, $\bff_{i}$, $\bk_{i}^{\pm 1}  \ (i \in \{1, \ldots, r\})$, and $t$, 
subject to the following relations for $i, j \in \{ 1, \ldots, r \}$ (see \cite{BW13, BK15}):
\begin{align*}
 \bk_{i} \bk_{i}^{-1} &= \bk_{i}^{-1} \bk_{i} =1, \quad
 \bk_{i} \bk_{j} = \bk_{j} \bk_{i}, \displaybreak[0]\\
 \bk_{i} \be_{j} \bk_{i}^{-1} &= q^{(\alpha_i-\alpha_{-i}, \alpha_j)} \be_{j}, 
 \quad
 \bk_{i} \bff_{j} \bk_{i}^{-1}  = q^{-(\alpha_i-\alpha_{-i}, \alpha_j)} \bff_{j},  
 \quad
 \bk_{i}\bt\bk^{-1}_{i} = \bt, \displaybreak[0]\\
 \be_{i} \bff_{ j} -\bff_{ j} \be_{i} &= \delta_{i,j} \frac{\bk_{i} -\bk^{-1}_{i}}{q-q^{-1}}, \displaybreak[0]\\
 \be_{i} \be_{ j} &= \be_{ j} \be_{i}, \quad
  \bff_{i} \bff_{ j}  = \bff_{ j}  \bff_{i},   \qquad\qquad\ \ \ \ \qquad &\text{if }& |i-j|>1, \displaybreak[0]\\
 \be_{i}\bt &=\bt\be_{i}, \quad
  \bff_{i}\bt =\bt\bff_{i}, \qquad\quad &\text{if }&  i > 1, \displaybreak[0]\\
  \be_{i}^2 \be_{ j} +\be_{ j} \be_{i}^2 &= (q+q^{-1}) \be_{i} \be_{ j} \be_{i},  \quad
 \bff_{i}^2 \bff_{ j} +\bff_{ j} \bff_{i}^2 = (q+q^{-1}) \bff_{i} \bff_{ j} \bff_{i},
    \ \ \quad\quad &\text{if }& |i-j|=1,\displaybreak[0]\\
 \be_{ 1}^2\bt + \bt\be_{ 1}^2 &= (q+q^{-1}) \be_{ 1}\bt\be_{ 1},\quad
 \bff_{ 1}^2\bt + \bt\bff_{ 1}^2 = (q+q^{-1}) \bff_{ 1}\bt\bff_{1},\displaybreak[0]\\
 \bt^2\be_{ 1} + \be_{ 1}\bt^2 &= (q + q^{-1}) \bt\be_{ 1}\bt + \be_{ 1},\quad
 \bt^2\bff_{ 1} + \bff_{ 1}\bt^2 = (q + q^{-1}) \bt\bff_{ 1}\bt + \bff_{ 1}.\displaybreak[0]
\end{align*}

The next lemma follows by inspection using the above presentation of $\Ui$.

\begin{lem}\label{lem:bar}
There exists a unique $\Q$-algebra bar involution $\ibar$ on $\Ui$ such that 
$p \mapsto p\inv$, $q \mapsto q\inv$, $\bk_{i} \mapsto \bk_{i}\inv$, $\be_{i} \mapsto \be_{i}$, $\bff_{i} \mapsto \bff_{i}$, and $\bt \mapsto \bt$, for $i \in \I^\imath$.
\end{lem}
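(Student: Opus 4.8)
The plan is to verify that the assignment $p \mapsto p^{-1}$, $q \mapsto q^{-1}$, $\bk_i \mapsto \bk_i^{-1}$, $\be_i \mapsto \be_i$, $\bff_i \mapsto \bff_i$, $\bt \mapsto \bt$ respects all the defining relations in the presentation of $\Ui$ listed above. Since $\Ui$ is presented by generators and relations, uniqueness is immediate (the generators are sent to prescribed elements), and existence amounts to checking that the prescribed images satisfy the same relations. Because the map is required to be a $\Q$-algebra homomorphism (not $\Q(p,q)$-linear) that inverts $p$ and $q$, one must be careful that every structure constant appearing in a relation — i.e., every coefficient in $\Q(p,q)$ — is itself bar-invariant, or else is matched by a corresponding change on the generator side.

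First I would run through the relations one family at a time. The relations $\bk_i\bk_i^{-1}=\bk_i^{-1}\bk_i=1$ and $\bk_i\bk_j=\bk_j\bk_i$ are preserved since $\bk_i \mapsto \bk_i^{-1}$. For $\bk_i \be_j \bk_i^{-1} = q^{(\alpha_i-\alpha_{-i},\alpha_j)}\be_j$: applying the candidate involution to the left side gives $\bk_i^{-1}\be_j\bk_i = q^{-(\alpha_i-\alpha_{-i},\alpha_j)}\be_j$ by the original relation, and applying it to the right side gives $q^{-(\alpha_i-\alpha_{-i},\alpha_j)}\be_j$ since $q \mapsto q^{-1}$; these agree. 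The same bookkeeping handles the $\bff_j$ and $\bt$ conjugation relations. The commutator relation $\be_i\bff_j - \bff_j\be_i = \delta_{i,j}\frac{\bk_i - \bk_i^{-1}}{q-q^{-1}}$ is the key sanity check: the image of the right side is $\delta_{i,j}\frac{\bk_i^{-1}-\bk_i}{q^{-1}-q} = \delta_{i,j}\frac{\bk_i-\bk_i^{-1}}{q-q^{-1}}$, so it is bar-invariant, matching the image of the left side. The pure commutation relations ($\be_i\be_j=\be_j\be_i$ etc. for $|i-j|>1$, and the $\bt$-commutations for $i>1$) are trivially preserved. For the $q$-Serre-type relations (the $(q+q^{-1})$ relations, including the ones involving $\bt$), the coefficient $q+q^{-1}$ is manifestly bar-invariant, and the constant terms $\be_1$, $\bff_1$ in the last two relations are fixed, so each such relation maps to itself.

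The one point deserving genuine attention — and the step I expect to be the mildest obstacle — is the relation $\bt^2\be_1 + \be_1\bt^2 = (q+q^{-1})\bt\be_1\bt + \be_1$ (and its $\bff_1$ analogue): one must confirm that the inhomogeneous term is exactly $\be_1$ with coefficient $1$, a bar-invariant scalar, so no $p$- or $q$-dependent correction is needed. This is precisely why the particular normalization of $\bt = E_0 + qF_0K_0^{-1} + \frac{p-p^{-1}}{q-q^{-1}}K_0^{-1}$ in \eqref{eq:embedi} is chosen: the coefficient $\frac{p-p^{-1}}{q-q^{-1}}$ is itself bar-invariant (both numerator and denominator negate under $p\mapsto p^{-1}$, $q \mapsto q^{-1}$), which is what makes the relations close up cleanly under $\ibar$ without altering $\bt$. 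Having checked that all relations are preserved, the universal property of the presentation yields the desired $\Q$-algebra homomorphism $\ibar\colon \Ui \to \Ui$; applying $\ibar$ twice fixes all generators and inverts $p,q$ twice, hence is the identity, so $\ibar$ is an involution. This completes the verification; the whole argument is indeed "by inspection" as stated, once one has isolated the bar-invariance of $\frac{p-p^{-1}}{q-q^{-1}}$ as the governing fact.
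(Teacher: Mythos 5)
Your verification is correct and is essentially the paper's own (one-line) proof: checking by inspection that every defining relation of $\Ui$ has bar-invariant coefficients, so the assignment on generators extends to a $\Q$-algebra involution by the universal property of the presentation. One small caveat: since the defining relations of $\Ui$ involve only $q$ (the parameter $p$ enters only through the embedding \eqref{eq:embedi}, and the paper's remark notes that the constant $\kappa$ there is irrelevant to the presentation), the bar-invariance of $\frac{p-p^{-1}}{q-q^{-1}}$ is not what the presentation-level check rests on --- that normalization matters instead for how $\ibar$ relates to the bar involution $\psi$ of $\U$ via the intertwiner $\Upsilon$, which is a separate issue from this lemma.
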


\begin{rem}
The presentation of the algebra $\Ui$ here is the same as the algebra in almost the same notation in \cite[\S2.1]{BW13}. 
The parameter $\kappa \in \Q(p,q)$ in the embedding (see \eqref{eq:embedi}), 
$
\bt = E_{ 0} +qF_{ 0}K^{-1}_{ 0} + \kappa  K_{ 0}^{-1},
$
is irrelevant to the presentation of the algebra $\Ui$. This phenomenon was first observed in \cite{Le99}. 
\end{rem}

Let $\widehat{\U}$ be the completion of the $\Q (p,q)$-vector space $\U$ 
with respect to the following descending sequence of subspaces 
$\U^+ \U^0 \big(\sum_{\hgt(\mu) \geq N}\U_{\mu}^- \big)$,   for $N \ge 1.$
Then we have the obvious embedding of $\U$ into $\widehat{\U}$. 
We let $\widehat{\U}^-$ be the closure of $\U^-$ in $\widehat{\U}$, and so $\widehat{\U}^- \subseteq \widehat{\U}$. 
By continuity the $\Q(p,q)$-algebra structure 
on $\U$ extends to a $\Q(p,q)$-algebra structure on $ \widehat{\U}$.  The bar involution $\psi$  on $\U$ extends 
by continuity to an {\em anti-linear} (i.e., $p\mapsto p^{-1}, q\mapsto q^{-1}$) involution on $\widehat{\U}$, also denoted by  $\psi$.
Denote by $\N$ the set of nonnegative integers.
\begin{prop}\cite[Theorem~2.10]{BW13}
  \label{prop:intetwiner}
There is a unique family of elements $\Upsilon_\mu \in \U^-_{-\mu}$ for $\mu \in \N\Pi$ such that $\Upsilon_0 = 1$, and $\Upsilon = \sum_{\mu} \Upsilon_\mu \in \widehat{\U}^-$ intertwines the bar involution $\psi$ on $\U$ and the bar involution $\psi_{\imath}$ on $\Ui$; that is, $\Upsilon$ satisfies the following identity:
\begin{align}
\psi_\imath(u) \cdot \Upsilon = \Upsilon \cdot \psi({u}),\qquad \text{ for all } u \in \Ui. 
\nonumber
\end{align}
\end{prop}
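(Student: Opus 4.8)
The plan is to construct the family $\Upsilon_\mu$ recursively on the height of $\mu$, and to verify that the intertwining identity need only be checked on the algebra generators of $\Ui$, namely $\bk_i^{\pm 1}$, $\be_i$, $\bff_i$ ($i \in \I^\imath$) and $\bt$. Since $\Upsilon$ is to be an element of $\widehat{\U}^-$, we may compare coefficients in each weight space. Writing the desired identity as $\Upsilon \cdot \psi(u) = \psi_\imath(u) \cdot \Upsilon$, and using that $\psi(\bk_i) = \bk_i\inv$ etc. only after expressing each generator of $\Ui$ inside $\U$, we obtain a system of equations in $\widehat{\U}$ that, upon projecting to weight components, becomes a recursion: the weight-$\mu$ part of the identity for a generator $x$ of weight $\nu$ determines $\Upsilon_\mu$ in terms of the $\Upsilon_{\mu'}$ with $\hgt(\mu') < \hgt(\mu)$.

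First I would treat the $\bk_i$ relations. Because $\bk_i = K_iK_{-i}\inv$ is a weight element and $\psi_\imath(\bk_i) = \bk_i\inv = \psi(\bk_i)$, the identity $\psi_\imath(\bk_i)\Upsilon = \Upsilon\psi(\bk_i)$ forces each $\Upsilon_\mu$ to lie in the weight space where $\bk_i$ acts trivially by conjugation, i.e. $\mu$ must be $\tau$-invariant in the appropriate sense; this is automatically consistent and imposes no obstruction, merely cutting down which $\mu$ can have $\Upsilon_\mu \neq 0$. Next I would handle $\be_i = E_i + F_{-i}K_i\inv$ and $\bff_i = E_{-i} + K_{-i}\inv F_i$ for $i \in \I^\imath$: here $\psi_\imath$ fixes these generators, so one expands $\Upsilon \cdot \psi(E_i + F_{-i}K_i\inv)$ and commutes the $E$'s and $F$'s past $\Upsilon \in \widehat{\U}^-$ using the quantum Serre-type straightening (the standard commutator $[E_i, F^{(n)}_i]$-type formulas in $\U$), collecting everything into $\U^-$ times the Cartan part, and reading off the weight-$\mu$ equation. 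This yields the recursion $\Upsilon_\mu$ is determined and well-defined (the denominators appearing are the usual $q^n - q^{-n}$ which are invertible). The $\bt$ relation is the genuinely new ingredient: since $\bt = E_0 + qF_0K_0\inv + \frac{p-p\inv}{q-q\inv}K_0\inv$ and $\psi_\imath(\bt) = \bt$ while $\psi(\bt) = E_0 + q\inv F_0 K_0\inv + \frac{p\inv - p}{q\inv - q}K_0\inv$, the extra scalar term $\frac{p-p\inv}{q-q\inv}K_0\inv$ contributes a new inhomogeneous term to the recursion, and one must check that the resulting $\Upsilon_\mu$ is still consistent with the equations coming from $\be_1, \bff_1$ (which interact with $\bt$ via the nontrivial relations $\be_1^2\bt + \bt\be_1^2 = (q+q\inv)\be_1\bt\be_1$ and $\bt^2\be_1 + \be_1\bt^2 = (q+q\inv)\bt\be_1\bt + \be_1$).

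The cleanest route, and the one I would actually follow, is not to redo this recursion from scratch but to \emph{reduce to} \cite[Theorem~2.10]{BW13}: the algebra $\Ui$ here has literally the same presentation as the one in \cite[\S2.1]{BW13} (as the Remark above emphasizes, the parameter $\kappa = \frac{p-p\inv}{q-q\inv}$ is irrelevant to the presentation and to the bar involution), the comultiplication and bar involution $\psi$ on $\U$ are the standard ones, and the completion $\widehat{\U}^-$ is defined identically. Hence the existence and uniqueness proof of \emph{loc.\ cit.} applies verbatim over the larger base field $\Q(p,q)$: one checks that every structural input used there — the coideal property $\Delta(\Ui)\subset \Ui\otimes\U$, the triangular decomposition of $\U$, and the commutation formulas — holds unchanged, the only difference being that the recursion coefficients now lie in $\Q(p,q)$ rather than $\Q(q)$ (and in particular the inhomogeneous $\bt$-term carries the factor $\frac{p-p\inv}{q-q\inv}$, which is a well-defined element of $\Q(p,q)$). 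Thus I would state the proof as: "The argument of \cite[Theorem~2.10]{BW13} carries over verbatim, extending scalars from $\Q(q)$ to $\Q(p,q)$; the only point requiring comment is that the new term in $\bt$ contributes an element of $\Q(p,q)$ to the defining recursion, which causes no change to the inductive construction." The main obstacle, to the extent there is one, is bookkeeping: making sure that the height induction on $\mu$ interacts correctly with the $\bt$-relations in which $\bt$ appears quadratically, so that the $\Upsilon_\mu$ produced by the $\bt$-equation agrees with the one produced by the $\be_i$- and $\bff_i$-equations — but this compatibility is exactly what is proved in \cite{BW13}, and extending the base field does not disturb it.
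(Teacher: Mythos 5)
Your proposal is correct and takes essentially the same route as the paper, which proves this proposition simply by observing that the argument of \cite[Theorem~2.10]{BW13} (given there in the specialization $p=q$ of \eqref{eq:embedi}) goes through unchanged over $\Q(p,q)$, the only new feature being the coefficient $\frac{p-p^{-1}}{q-q^{-1}}$ coming from the embedding of $\bt$ --- exactly the reduction you settle on. (The only blemish is in your heuristic sketch: $\psi(\bt)=E_0+q^{-1}F_0K_0+\frac{p-p^{-1}}{q-q^{-1}}K_0$ since $\psi(K_0^{-1})=K_0$, but this does not affect your actual argument.)
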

Note that \cite[Theorem~2.10]{BW13} was proved in the setting of $p=q$ in \eqref{eq:embedi}, but the same proof works here.
We shall call $\Upsilon$ the intertwiner. 

Consider a $\Q(q)$-valued function $\zeta$ on $\Lambda$ (which  is independent of the parameter $p$), such that $\text{for all } \mu \in \Lambda, \; i \in \I^\imath$, we have 
\begin{align}
\label{eq:zeta}
\begin{split}\zeta (\mu+\alpha_0)&=-q \zeta (\mu),  \\ 
\zeta (\mu+\alpha_i) &= -q^{(\alpha_i -\alpha_{-i}, \mu+\alpha_i)} \zeta (\mu), \\
\zeta (\mu+\alpha_{-i}) &= -q^{(\alpha_{-i}, \mu+\alpha_{-i}) - (\alpha_{i}, \mu)} \zeta (\mu).  
\end{split}
\end{align}

Such $\zeta$ clearly exists. For any weight $\U$-module $M$, 
we define a $\Q(p,q)$-linear map on $M$ associated with $\zeta$ as follows:
\begin{align}
\label{eq:zetatd}
\begin{split}
\widetilde{\zeta}&: M \longrightarrow M, 
 \\
\widetilde{\zeta}  (m &)  =  \zeta (\mu)m, \quad \text{for all } m \in M_{\mu}.
\end{split}  
\end{align}

Let $w_0$ denotes the longest element of the Weyl group $W_{A_{2r+1}}$ of type $A_{2r+1}$ and $T_{w_0}$ the associated braid group element. 
The following proposition is a 2-parameter variant of {\cite[Theorem 2.18]{BW13}}, with the same proof.

\begin{prop}\label{prop:mcT}
For any finite-dimensional $\U$-module $M$, the composition map
\[
\mc{T} := \Upsilon\circ \widetilde{\zeta} \circ T_{w_0}: M \longrightarrow M
\] 
is a $\Ui$-module isomorphism.
\end{prop}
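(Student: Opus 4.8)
The plan is to reduce Proposition~\ref{prop:mcT} to the statement that $\mc{T}$ intertwines the $\Ui$-action, exploiting the fact that each of the three factors $T_{w_0}$, $\widetilde{\zeta}$, and $\Upsilon$ has an explicit, well-understood effect on the generators \eqref{eq:embedi}. Since $T_{w_0}$, $\widetilde{\zeta}$ and $\Upsilon$ are all invertible on a finite-dimensional weight module $M$ (the first two obviously, and $\Upsilon = \sum_\mu \Upsilon_\mu$ with $\Upsilon_0 = 1$ acts as a unipotent-type operator on the finite-dimensional $M$), the composition $\mc{T}$ is an isomorphism of $\Q(p,q)$-vector spaces; so the only thing to check is $\Ui$-equivariance, i.e.\ $\mc{T} \circ u = u \circ \mc{T}$ on $M$ for every $u \in \Ui$, and by the presentation of $\Ui$ it suffices to verify this for the algebra generators $\be_i, \bff_i, \bk_i^{\pm1}$ $(i \in \I^\imath)$ and $\bt$.

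The key steps, in order. First I would recall (from \cite[\S2]{BW13} or Lusztig's book) the conjugation action of $T_{w_0}$ on the Chevalley generators of $\U$: since $w_0$ sends $\alpha_i \mapsto -\alpha_{\tau(i)} = -\alpha_{-i}$ for the diagram involution $\tau$ of type $A_{2r+1}$, the operator $T_{w_0}$ carries $E_i$, $F_i$, $K_i$ (up to explicit sign/power-of-$q$ normalisation coming from the chosen braid group conventions) to expressions in $E_{-i}$, $F_{-i}$, $K_{-i}$. Composing with $\widetilde{\zeta}$, whose defining relations \eqref{eq:zeta} are precisely engineered so that $\widetilde{\zeta} \circ T_{w_0}$ conjugates the generators \eqref{eq:embedi} of $\Ui$ into the corresponding \emph{bar-twisted} generators — concretely, $\widetilde{\zeta} \circ T_{w_0}$ should send $u \in \Ui$ to $\psi_\imath(\psi(u))$-type images, or more precisely intertwine the $\Ui$-action with the $\psi(\U)$-twisted action in the manner of \cite[Lemma~2.16, Theorem~2.18]{BW13}. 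Second, I would invoke Proposition~\ref{prop:intetwiner}: the intertwiner $\Upsilon$ satisfies $\psi_\imath(u)\,\Upsilon = \Upsilon\,\psi(u)$ for all $u \in \Ui$, which is exactly what is needed to absorb the remaining bar-twist and conclude $u\,\mc{T} = \mc{T}\,u$. The proof is then a matter of assembling these three equivariance statements and tracking that the $p$-dependent term $\frac{p-p^{-1}}{q-q^{-1}} K_0^{-1}$ in $\bt$ behaves correctly — this is the only genuinely new input over \cite{BW13}, and it works because $\psi$ already sends $p \mapsto p^{-1}$ so that the defining property of $\Upsilon$ (with respect to $\psi_\imath$ from Lemma~\ref{lem:bar}) handles it verbatim.

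The main obstacle I anticipate is verifying that the $\bt$-generator is correctly intertwined, since $\bt$ is the generator attached to the branch node $\alpha_0$ with the loop in the Satake diagram and carries the extra parameter $\kappa = \frac{p-p^{-1}}{q-q^{-1}}$. One must check that $\widetilde{\zeta}\circ T_{w_0}$ maps $\bt = E_0 + qF_0 K_0^{-1} + \kappa K_0^{-1}$ to the expected $\psi$-twisted expression, which requires the normalisation $\zeta(\mu+\alpha_0) = -q\,\zeta(\mu)$ to interact correctly with $T_{w_0}(E_0), T_{w_0}(F_0), T_{w_0}(K_0)$, and that the scalar $\kappa$ — being $\psi$-anti-invariant in the sense that $\psi(\kappa) = -\kappa$ while $\psi_\imath(\bt) = \bt$ — is reconciled by $\Upsilon$. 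Since Proposition~\ref{prop:intetwiner} is quoted as holding verbatim in the 2-parameter setting, and $\bt$ is bar-invariant under $\psi_\imath$ by Lemma~\ref{lem:bar}, this reconciliation is automatic once the $T_{w_0}$-conjugation formulas are pinned down; the remaining generators $\be_i, \bff_i, \bk_i$ are handled exactly as in \cite[Theorem~2.18]{BW13} with the $q$-only formulas \eqref{eq:zeta}. I would therefore present the proof as: (i) invertibility of $\mc{T}$; (ii) the conjugation lemma for $\widetilde{\zeta}\circ T_{w_0}$ on the generators \eqref{eq:embedi}, with the $\bt$-case spelled out in full; (iii) application of Proposition~\ref{prop:intetwiner} to finish.
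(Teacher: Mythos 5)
Your proposal follows essentially the same route as the paper, which proves Proposition~\ref{prop:mcT} by declaring it a two-parameter variant of \cite[Theorem~2.18]{BW13} with the same proof: invertibility of the three factors, the conjugation property of $\widetilde{\zeta}\circ T_{w_0}$ turning the $\Ui$-action into its bar-twisted version on the generators \eqref{eq:embedi}, and then the intertwining identity of $\Upsilon$ from Proposition~\ref{prop:intetwiner} to remove the twist. One minor correction: $\psi(\kappa)=\kappa$ rather than $-\kappa$, since both $p-p^{-1}$ and $q-q^{-1}$ change sign under $\psi$; this does not affect your argument, because what is actually used is the defining property of $\Upsilon$ applied to the $\psi_\imath$-invariant generator $\bt$ (Lemma~\ref{lem:bar}).
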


%
%

\subsection{Hecke algebra of type $B$ with 2 parameters}

We often write $q_i =q$ for $1\le i \le m-1$ and $q_0=p$. 
Let $\HBm$ be the Hecke algebra of type $B_m$ with 2 parameters over $\Q(p,q)$ 
generated by $H_0, H_1, \ldots, H_{m-1}$ 
and subject to the following relations: 
\begin{equation}\label{eq:HB}
\begin{split}
 (H_i-q_i^{-1})(H_i+q_i)&=0, \qu \text{for }i \ge 0, \\
H_i H_{i+1} H_i &= H_{i+1} H_i H_{i+1}, \qu \text{for } i \geq 1,  \\
H_0 H_1 H_0 H_1 = H_1 H_0 H_1 H_0, \quad H_i H_j &= H_j H_i, \qu \text{for } |i-j| > 1.
\end{split}
\end{equation}

Let $\WBm$ be the Weyl group of type $B_m$ with simple reflections $s_0, s_1, \ldots, s_{m-1}$. 
For each $w \in \WBm$ with a reduced expression $w = s_{i_1} \cdots s_{i_r}$, the products $H_w =H_{i_1} \cdots H_{i_r}$ 
and $q_w =q_{i_1} \cdots q_{i_r}$ are independent of the choice of the reduced expressions.

\subsection{The $(\Ui,\HBm)$-duality}
Let $\Ii = \I \pm \hf$. Let $\VV = \bigoplus_{a \in \Ii} \Q(p,q)u_a$ be the natural representation of $\U$. The $\U$-module structure of $\VV$ can be visualized as follows:
\begin{align}
\xymatrix{
u_{r+\frac{1}{2}} \ar@/_/[r]_{E_{r}} & 
u_{r-\frac{1}{2}} \ar@/_/[l]_{F_{r}} \ar@/_/[r]_{E_{{r-1}}} & 
\cdots \ar@/_/[l]_{F_{{r-1}}} \ar@/_/[r]_{E_{{1}}} & 
u_{\frac{1}{2}} \ar@/_/[l]_{F_{{1}}} \ar@/_/[r]_{E_{{0}}} & 
u_{-\frac{1}{2}} \ar@/_/[l]_{F_{{0}}} \ar@/_/[r]_{E_{{-1}}} & 
\cdots \ar@/_/[l]_{F_{{-1}}} \ar@/_/[r]_{E_{{-r}}} & 
u_{-r-\frac{1}{2}} \ar@/_/[l]_{F_{{-r}}}}. \nonumber
\end{align}

We denote by $\VV^{\otimes m}$ the $m$-th tensor product of $\VV$, which is naturally an $\U$-module via iterated comultiplication. 
Hence $\VV^{\otimes m}$ is a $\Ui$-module by restriction.

For any $f =(f(1),\ldots,f(m)) \in \Ii^m$, we define
\begin{equation}  \label{Mf}
M_f= u_{f(1)} \otimes \cdots \otimes u_{f(m)}.
\end{equation}
The Weyl group $W_{B_m}$ acts on $\Ii^m$ on the right in the obvious way: for $j \geq 1$ and $i\in I$,
\begin{align}
(f \cdot s_j)(i) &= \begin{cases}
f(j+1) \qu & \text{if } i = j; \\
f(j) \qu & \text{if } i = j+1; \\
f(i) \qu & \text{otherwise};
\end{cases} \nonumber\\
(f \cdot s_0)(i) &= \begin{cases}
-f(1) \qu & \text{if } i = 1; \\
f(i) \qu & \text{otherwise}.
\end{cases} \nonumber
\end{align}
The Hecke algebra $\HBm$ acts naturally on $\VV^{\otimes m}$ on the right as follows:
\begin{align}  \label{Haction} 
\begin{split}
 M_f \cdot H_i &=
 \begin{cases}
 q^{-1}M_f, & \text{ if }  f(i) = f(i+1);\\
 M_{f \cdot  s_i}, & \text{ if }  f(i) < f(i+1);\\
 M_{f \cdot  s_i} + (q^{-1} - q) M_{f}, & \text{ if } f(i) > f(i+1);
 \end{cases}
 \\
 M_f \cdot H_0 &= 
 \begin{cases}
 M_{f \cdot s_0}, &\text{ if } f(1) > 0;\\
  M_{f \cdot  s_0} + (p^{-1} - p) M_{f}, & \text{ if } f(1) <0.
 \end{cases}
 \end{split}
\end{align}
We shall depict the actions of $\Ui$ and $\HBm$ on $\VV^{\otimes m}$ as
\begin{equation}  \label{LRact}
\Ui \; \stackrel{\Phi}{\curvearrowright} \;
\VV^{\otimes m} \; \stackrel{\Psi}{\curvearrowleft}
\; \HBm.
\end{equation}

Now we fix $\zeta$ in \eqref{eq:zeta} by setting $\zeta(\epsilon_{-r-\frac{1}{2}}) = 1$. Then, we have
\begin{align}
\zeta(\epsilon_{-i - \frac{1}{2}}) = (-q)^{-r+i},  \qquad \forall i \in \{ r, r-1, \ldots, -r-1 \}.
\nonumber
\end{align} 

\begin{lem}\label{lem:H0}
The actions of $H_0$ and $\mc{T}^{-1}$ on $\VV$ coincides. 
\end{lem}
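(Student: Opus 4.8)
The plan is to verify the claimed identity $M_f \cdot H_0 = \mc{T}^{-1}(M_f)$ on $\VV$ by direct computation, reducing everything to the rank-one situation. Since $\VV$ is a finite-dimensional $\U$-module, Proposition~\ref{prop:mcT} applies and $\mc{T} = \Upsilon \circ \widetilde{\zeta} \circ T_{w_0}$ is a $\Ui$-module automorphism of $\VV$; hence $\mc{T}^{-1}$ is well-defined and also $\Ui$-linear. First I would note that on the natural module $\VV$ of $\U_q(\mathfrak{sl}_{2r+2})$ all the weight spaces are one-dimensional, so both $H_0$ and $\mc{T}^{-1}$ are determined by how they permute/rescale the basis vectors $u_a$, $a \in \Ii$; it suffices to compute $\mc{T}^{-1}(u_a)$ for each $a$ and match it against the formula \eqref{Haction} for $M_f \cdot H_0$ with $m=1$.

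Next I would compute $\mc{T}(u_a)$ (and invert). The three ingredients act as follows. The braid group element $T_{w_0}$ for the longest element $w_0$ of $W_{A_{2r+1}}$ acts on $\VV$ by sending the weight space of weight $\varepsilon_a$ to that of weight $w_0(\varepsilon_a) = \varepsilon_{-a}$; concretely $T_{w_0}(u_a) = (\pm q^{?}) u_{-a}$ for an explicit sign and power of $q$ coming from Lusztig's formulas for the action of $T_{w_0}$ on a minuscule representation (this is standard; see \cite{L94}, and it was worked out in \cite{BW13} in the equal-parameter case, where the proof carries over verbatim since $T_{w_0}$ does not involve $p$). Then $\widetilde{\zeta}$ multiplies $u_{-a}$ by $\zeta(\varepsilon_{-a})$, and by the normalization $\zeta(\varepsilon_{-r-\frac12}) = 1$ fixed just before the lemma we have the explicit values $\zeta(\varepsilon_{-i-\frac12}) = (-q)^{-r+i}$. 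Finally I must understand $\Upsilon$ acting on $\VV$: since $\Upsilon = \sum_\mu \Upsilon_\mu$ with $\Upsilon_\mu \in \U^-_{-\mu}$ and $\VV$ is so small, only finitely many $\mu$ contribute, and in fact on the natural representation the relevant lowering operators are the $F_i$'s; the only place a $p$ can enter is through the generator $\bt = E_0 + qF_0K_0^{-1} + \frac{p-p^{-1}}{q-q^{-1}}K_0^{-1}$ in the defining relation $\psi_\imath(u)\Upsilon = \Upsilon\psi(u)$, which forces the degree-$\alpha_0$ component $\Upsilon_{\alpha_0}$ to be a specific multiple of $F_0$ involving $\frac{p-p^{-1}}{q-q^{-1}}$ (and higher $\Upsilon_\mu$ act as $0$ on $\VV$ by weight reasons, once applied to the vectors in question). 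Assembling the three factors gives an explicit formula for $\mc{T}(u_a)$, and then I solve for $\mc{T}^{-1}(u_a)$.

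The comparison then splits into the two cases of the $H_0$-action in \eqref{Haction}: $a = f(1) > 0$, where $H_0$ should act by the plain swap $u_a \mapsto u_{-a}$, and $a < 0$, where $H_0$ acts by $u_a \mapsto u_{-a} + (p^{-1}-p)u_a$. Matching the $a<0$ case is where the term $\frac{p-p^{-1}}{q-q^{-1}}K_0^{-1}$ in $\bt$ (hence in $\Upsilon_{\alpha_0}$) is exactly what produces the correction term $(p^{-1}-p)u_a$, after combining with the sign and $q$-powers from $T_{w_0}$ and the ratios $\zeta(\varepsilon_{-a})/\zeta(\varepsilon_a)$; one checks that all the $q$-powers cancel to leave a pure coefficient in $p$. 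I expect the main obstacle to be bookkeeping: pinning down the precise scalar in $T_{w_0}(u_a)$ (sign conventions for the braid action on a minuscule module can be delicate) and the precise coefficient of $F_0$ in $\Upsilon_{\alpha_0}$, and then verifying that the product of these $q$-dependent scalars with the explicit $\zeta$-ratios collapses to the clean $p$-only coefficients $1$ and $(p^{-1}-p)$. This is a finite, rank-one check once the conventions are fixed, and it is essentially the specialization of the corresponding computation in \cite{BW13} with $p$ now decoupled from $q$.
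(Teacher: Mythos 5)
Your overall strategy (evaluate $\mc{T}=\Upsilon\circ\widetilde{\zeta}\circ T_{w_0}$ on basis vectors and compare with \eqref{Haction} for $m=1$) starts out in the right direction, but the step where you discard all components of $\Upsilon$ except $\Upsilon_{\alpha_0}$ ``by weight reasons'' is a genuine gap, and in fact the claim is false for most of the vectors you need. Weight considerations only kill $\Upsilon_\mu$ on a vector of weight $\varepsilon_a$ when $\varepsilon_a-\mu$ is not a weight of $\VV$; for $j\ge 1$ the component $\Upsilon_{\alpha_{-j}+\alpha_{-j+1}+\cdots+\alpha_j}$ sends $u_{-j-\hf}$ to a multiple of $u_{j+\hf}$, which is not excluded by any weight bookkeeping (nor by the $\bk_i$-equivariance of $\Upsilon$, since these $\mu$ are $\tau$-symmetric). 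Indeed these components cannot act by zero on $\VV$: since $\widetilde{\zeta}\circ T_{w_0}(u_{j+\hf})=u_{-j-\hf}$ exactly, the statement of the lemma forces $\mc{T}(u_{j+\hf})=u_{-j-\hf}+(p-p^{-1})u_{j+\hf}$ for every $j\ge 0$, and for $j\ge 1$ the correction term $(p-p^{-1})u_{j+\hf}$ is produced precisely by $\Upsilon_{\alpha_{-j}+\cdots+\alpha_j}$ acting on $u_{-j-\hf}$. So your computation, carried out with $\Upsilon_{\alpha_0}$ alone, would return $\mc{T}(u_{j+\hf})=u_{-j-\hf}$ for $j\ge 1$, and the comparison with $H_0$ would fail; to push your route through you would have to determine the action on $\VV$ of all these higher components of $\Upsilon$, which is exactly the hard part and is not addressed. (A smaller point: the coefficient of $F_{\alpha_0}$ in $\Upsilon_{\alpha_0}$ is $p-p^{-1}$, not a multiple of $\frac{p-p^{-1}}{q-q^{-1}}$.)

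The paper sidesteps this difficulty by a module-theoretic reduction that your proposal is missing. It first checks by direct computation that $\VV=\VV^+\oplus\VV^-$ as a $\Ui$-module, where $\VV^+$ is spanned by the vectors $u_{-j-\hf}+pu_{j+\hf}$ and $\VV^-$ by $u_{-j-\hf}-p^{-1}u_{j+\hf}$; these are non-isomorphic simple $\Ui$-modules (distinguished by the $\bt$-eigenvalues), and $H_0$ acts on them by the scalars $p^{-1}$ and $-p$ respectively. Since $\mc{T}$ is a $\Ui$-module isomorphism by Proposition~\ref{prop:mcT}, it must preserve each summand and act on it by a scalar, so it suffices to evaluate $\mc{T}$ on the two middle vectors $u_{\pm\hf}$ --- and only there is it true that $\Upsilon_{\alpha_0}$ is the unique component of $\Upsilon$ acting nontrivially for weight reasons. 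Either import this reduction (after which your rank-one computation is exactly what is needed), or supply the values of the higher $\Upsilon_\mu$ on $\VV$; as written, the argument does not go through.
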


\begin{proof}
Let 
\begin{align*}
\VV^+ &= \bigoplus_{j\in  \{0,\dots, r\}} \Q(p,q)  \big( u_{-j-\frac{1}{2}} + pu_{j+\frac{1}{2}} \big), 
\\
\VV^- &= \bigoplus_{j\in  \{0,\dots, r\}} \Q(p,q) \big(u_{-j-\frac{1}{2}} - p\inv u_{j+\frac{1}{2}} \big). 
\end{align*}
By direct calculations, we have   (for $i \in \I^\imath$, $j\in \{0,\dots, r\}$)
\begin{align}
\bt \cdot \big(u_{-j-\frac{1}{2}} + pu_{j+\frac{1}{2}}\big) &= \frac{pq^{\delta_{j,0}}-p^{-1}q^{-\delta_{j,0}}}{q-q^{-1}}\big(u_{-j-\frac{1}{2}} + pu_{j+\frac{1}{2}}\big), \nonumber\\
\bff_{i} \cdot \big(u_{-j-\frac{1}{2}} + pu_{j+\frac{1}{2}}\big) &= \delta_{i, j+1} \cdot \big(u_{-(j+1)-\frac{1}{2}} + p u_{(j+1) + \frac{1}{2}}\big), \nonumber\\
\be_{i} \cdot \big(u_{-j-\frac{1}{2}} + pu_{j+\frac{1}{2}}\big) &= \delta_{i, j} \cdot \big(u_{-(j-1)-\frac{1}{2}} + p u_{(j-1) + \frac{1}{2}}\big), \nonumber\\
%
\bt \cdot  \big(u_{-j-\frac{1}{2}} - p\inv u_{j+\frac{1}{2}}\big) &=  \frac{pq^{-\delta_{j,0}}-p^{-1}q^{\delta_{j,0}}}{q-q^{-1}}\big(u_{-j-\frac{1}{2}} - p\inv u_{j+\frac{1}{2}}\big), \nonumber\\
\bff_{i} \cdot \big(u_{-j-\frac{1}{2}} - p\inv u_{j+\frac{1}{2}}\big) &= \delta_{i, j+1} \cdot \big(u_{-(j+1)-\frac{1}{2}} - p\inv u_{(j+1) + \frac{1}{2}}\big), \nonumber\\
\be_{i} \cdot \big(u_{-j-\frac{1}{2}} - p\inv u_{j+\frac{1}{2}}\big) &= \delta_{i, j} \cdot \big(u_{-(j-1)-\frac{1}{2}} - p\inv u_{(j-1) + \frac{1}{2}}\big). \nonumber
\end{align}
Hence, we have $\VV =\VV^+ \oplus \VV^-$ as a $\Ui$-module. 

Since we have
$
T_{w_0}(u_{j + \frac{1}{2}}) = (-q)^{r-j}  u_{-j-\frac{1}{2}}, \nonumber
$
we obtain $\widetilde{\zeta} \circ T_{w_0}(u_{j+\frac{1}{2}}) = u_{-j-\frac{1}{2}}$. 
On the other hand, one computes the first terms of $\Upsilon$ as $\Upsilon_{\alpha_0} = (p - p\inv)F_{\alpha_0}$, which is the only term with non-trivial action for weight reason. Hence,  we have that
$
\mc{T}(u_{-\frac{1}{2}}) = u_{\frac{1}{2}}, \mc{T}(u_{\frac{1}{2}}) = u_{-\frac{1}{2}} - (p\inv - p)u_{\frac{1}{2}},  
$  
which implies the actions of $\mc{T}^{-1}$ on $\VV^+$ and $\VV^-$ are given  by scalar multiplication by $p^{-1}$ and $-p$, respectively. The
lemma is proved.
\end{proof}

Now, we state our first main theorem.

\begin{thm}  [$\imath$-Schur duality]
\label{thm:UiHB}
The actions of $\Ui$ and $\HBm$ on $\VV^{\otimes m}$ \eqref{LRact} commute and form double centralizers; that is, 
$$
\Phi(\Ui)=
\End_{\HBm}(\VV^{\otimes m}), \quad
\End_{\Ui}(\VV^{\otimes m})^{\operatorname{op}} =
\Psi(\HBm).
$$
\end{thm}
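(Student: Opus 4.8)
The plan is to deduce the double centralizer property from the two "classical" specializations already known in the literature, combined with the multiparameter structure just set up. First I would record the commuting property: since $\Ui$ is a right coideal subalgebra of $\U$ and $\HBm$ acts through the type A Hecke action on tensor factors $1,\dots,m$ together with the action of $H_0$ on the first factor, the commutation $[\Phi(\Ui),\Psi(\HBm)]=0$ reduces to two checks. The commutation of $\Phi(\Ui)$ with $\Psi(\mathcal{H}_{A_{m-1}})$ is Jimbo duality (the $\U$-action commutes with the type A Hecke action and $\Ui\subset\U$), and the commutation with $H_0$ is exactly the content of Lemma~\ref{lem:H0} together with Proposition~\ref{prop:mcT}: the operator $\mc{T}$ acting on the first tensor factor is a $\Ui$-module isomorphism of $\VV$ (hence of $\VV^{\otimes m}$ by the coideal property, acting on the first slot), and its inverse on $\VV$ is $\Psi(H_0)$. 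So $\Psi(H_0)$ commutes with $\Phi(\Ui)$, and $[\Phi(\Ui),\Psi(\HBm)]=0$ follows.

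Next I would establish the double centralizer statement by a dimension-count-plus-specialization argument. The key point is that $\VV^{\otimes m}$, $\Phi(\Ui)$ and $\Psi(\HBm)$ are all defined over $\Q(p,q)$ and are "free of the right size": $\VV^{\otimes m}$ is a free $\Q(p,q)$-module of rank $(2r+2)^m$, and $\Psi(\HBm)$ has a basis indexed by the relevant double cosets acting on the permutation-module decomposition, identical in cardinality to the $p=q$ case. I would specialize $p=q$ (via an appropriate evaluation, or rather observe that the structure constants of both actions in the bases $\{M_f\}$ and $\{H_w\}$ lie in $\Z[p^{\pm1},q^{\pm1}]$, so that setting $p\mapsto q$ is harmless and does not drop ranks of the relevant image/endomorphism algebras). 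Under $p=q$ the pair $(\Ui,\HBm)$ becomes the pair treated in \cite{BW13}, where $\Phi(\Ui)=\End_{\HBm}(\VV^{\otimes m})$ and $\End_{\Ui}(\VV^{\otimes m})^{\mathrm{op}}=\Psi(\HBm)$ are known. Since $\dim_{\Q(p,q)}\Phi(\Ui)\le \dim_{\Q(q)}\Phi(\Ui)|_{p=q}$ and $\dim\End_{\HBm}(\VV^{\otimes m})\ge \dim\End_{\HBm}(\VV^{\otimes m})|_{p=q}$ (the endomorphism algebra can only grow under specialization), the inclusion $\Phi(\Ui)\subseteq\End_{\HBm}(\VV^{\otimes m})$, valid by the commuting property, is forced to be an equality by a sandwich of dimensions; the same argument, with the roles of $\Ui$ and $\HBm$ interchanged (using faithfulness of $\Psi$, which follows from the $p=q$ faithfulness plus the $\Z[p^{\pm1},q^{\pm1}]$-integrality), gives $\Psi(\HBm)=\End_{\Ui}(\VV^{\otimes m})^{\mathrm{op}}$.

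The step I expect to be the main obstacle is making the specialization argument rigorous: one must verify that $\dim_{\Q(p,q)}\End_{\HBm}(\VV^{\otimes m})$ does not exceed its value at $p=q$, and similarly for the $\Ui$-endomorphisms. The clean way to do this is to observe that all relevant matrices---the action matrices of the generators of $\Ui$ and of $\HBm$ in the basis $\{M_f\}$---have entries in $R:=\Q[p^{\pm1},q^{\pm1}]$, so that $\End_{\HBm}(\VV^{\otimes m})$ over $\Q(p,q)$ is the generic fiber of a kernel of an $R$-linear map between free $R$-modules; the rank of such a kernel is upper semicontinuous under specialization to any point of $\operatorname{Spec} R$ where $R$ remains a domain (here $p=q$), so $\dim$ over $\Q(p,q)$ $\le$ $\dim$ over $\Q(q)$ after $p=q$. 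Dually, $\Phi(\Ui)$, being the image of the (finitely generated, since $\Ui$ is) algebra map, has rank that is lower semicontinuous, giving the reverse inequality needed on the other side. Threading these semicontinuity inequalities together with the known equalities at $p=q$ closes both halves of the theorem. An alternative, self-contained route---mimicking the argument of \cite{BW13} verbatim with the two-parameter formulas of Lemma~\ref{lem:H0} and \eqref{Haction} in place---also works and may be cleaner to write; in that case the obstacle shifts to checking that the surjectivity argument ($\Psi(\HBm)$ exhausts $\End_{\Ui}$) survives the appearance of $p$ in the $H_0$-action, which it does precisely because of the $\VV=\VV^+\oplus\VV^-$ decomposition in the proof of Lemma~\ref{lem:H0}.
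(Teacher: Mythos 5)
Your commutativity argument is exactly the paper's: Jimbo's type A duality handles $H_1,\dots,H_{m-1}$, and the commutation with $H_0$ follows from Lemma~\ref{lem:H0}, Proposition~\ref{prop:mcT}, and the coideal property $\Delta(\Ui)\subset\Ui\otimes\U$ (so $\mc{T}^{-1}\otimes 1$ on the first slot is a $\Ui$-endomorphism of $\VV^{\otimes m}$). For the double centralizer the paper simply re-runs the deformation argument of \cite[Theorem 5.4]{BW13} in the two-parameter setting, whereas you specialize $p\mapsto q$ and invoke the equal-parameter theorem of \cite{BW13} as a black box, closing the gap by semicontinuity of ranks over $\Q[p^{\pm1},q^{\pm1}]$ localized at the prime $(p-q)$. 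This is a legitimate and arguably more economical route: it avoids redoing the classical-limit analysis, at the price of the bookkeeping that all action matrices are defined over that local ring and specialize to the setup of \cite{BW13}. Note on that point that your integrality claim is slightly off: the generator $\bt$ involves $\kappa=(p-p^{-1})/(q-q^{-1})$, so the entries lie not in $\Z[p^{\pm1},q^{\pm1}]$ but in its localization at $(p-q)$, which is all the specialization argument needs (and at $p=q$ one gets $\kappa=1$, matching the embedding of \cite{BW13}).

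One more caution: the inequalities displayed in your second paragraph are stated in the wrong directions and contradict your own parenthetical; as written they only reprove the inclusion you already have. The directions that semicontinuity actually provides, and that you do state correctly in your final paragraph, are
\[
\dim_{\Q(p,q)}\End_{\HBm}(\VV^{\otimes m})\;\le\;\dim\End_{\HBm}(\VV^{\otimes m})\big|_{p=q},
\qquad
\dim_{\Q(p,q)}\Phi(\Ui)\;\ge\;\dim\Phi(\Ui)\big|_{p=q},
\]
since a commutant is the kernel of a matrix over the local ring (its dimension can only grow under specialization), while an image can only shrink. With these, the chain $\Phi(\Ui)\subseteq\End_{\HBm}(\VV^{\otimes m})$ together with $\dim\End_{\HBm}\le\dim\End_{\HBm}|_{p=q}=\dim\Phi(\Ui)|_{p=q}\le\dim\Phi(\Ui)$ closes the first equality, and the symmetric chain (using that $\End_{\Ui}(\VV^{\otimes m})$ is the commutant of the finitely many generator matrices $\Phi(\be_i),\Phi(\bff_i),\Phi(\bk_i^{\pm1}),\Phi(\bt)$, so the kernel argument applies verbatim, and no faithfulness of $\Psi$ is needed) gives $\Psi(\HBm)=\End_{\Ui}(\VV^{\otimes m})^{\operatorname{op}}$. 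So the proposal is sound once the second paragraph is discarded in favor of the third.
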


\begin{proof}
Recall $\Ui$ acts as a subalgebra of $\U$ on $\VV^{\otimes m}$, and the Hecke algebra $\mathcal H_{A_{m-1}}$ 
is a subalgebra of $\HBm$ generated by $H_i$, for $1\le i\le m-1$.
Hence it follows from the $q$-Schur duality of type $A$ (\cite{J86}) that the action of $H_i$, for $1\le i\le m-1$, 
commutes with the action of $\U$, and hence of $\Ui$.
Note that $H_0$ acts only on the first factor of $\VV^{\otimes m}$. On the other hand, the commutativity of the actions of $H_0$ and $\Ui$ follows by
Proposition~\ref{prop:mcT}, Lemma~\ref{lem:H0}, and that the $\Ui$ is a right coideal subalgebra of $\U$. 

The double centralizer property is the multiparameter version of {\cite[Theorem~ 5.4]{BW13}}, and it follows by a deformation argument in the same way as in {\em loc. cit.}.
\end{proof}

\section{The $\LL$-bases and $\imath$-canonical bases}
  \label{sec:CBLB}

\subsection{Quantum symmetrizers}

The type $B$ Hecke algebra $\HBm$ \eqref{eq:HB} has a unique $\Q$-algebra bar involution $\bar{}:\HBm \rightarrow \HBm$ such that
\begin{align}
\overline{q}_i = q_i\inv,\quad \overline{H_w} = H_{w\inv}\inv, \qu \text{for } i \ge 0, w \in \WBm. \nonumber
\end{align}

For any subset $J \subseteq \{ 0, 1, \ldots, m-1 \}$, let $W_J$ be the parabolic subgroup of $W$ generated by $\{ s_j \mid j \in J \}$. 
Let $\mc{H}_J$ be the $\Q(p,q)$-subalgebra of $\HBm$ generated by $\{ H_j \mid j \in J \}$, 
and ${}^J W$ be the set of minimal length coset representatives for $W_J \setminus W$. We define
a quantum symmetrizer for the algebra $\mc H_J$:
\begin{align} \label{eq:eta}
\eta_{_J} = q_{w_{_J}} \sum_{x \in W_J} q_x\inv H_x \in \mc{H}_J, 
\end{align}
where $w_{_J}$ denotes the longest element of $W_J$.

\begin{lem}\label{lem:parabolic}
Let $J \subseteq \{ 0, 1, \ldots, m-1 \}$. Then, the following hold:
\begin{enumerate}
\item $\eta_{_J} H_j = q_j\inv \eta_{_J}$, for all $j \in J$.

\item For $w \in {}^J W$ and $j \in J$, we have
\begin{align}
(\eta_{_J} H_w) \cdot H_j = \begin{cases}
q_j\inv \eta_{_J} H_w \qu & \text{if } ws_j \notin {}^J W; \\
\eta_{_J} H_{ws_j} \qu & \text{if } ws_j \in {}^J W \text{ and } w < ws_j; \\
\eta_{_J} H_{ws_j} + (q_j\inv - q_j) \eta_{_J} H_w \qu & \text{if } ws_j \in {}^J W \text{ and } ws_j < w.
\end{cases} \nonumber
\end{align}
\item $\overline{\eta_{_J}} = \eta_{_J}$.
\end{enumerate}
\end{lem}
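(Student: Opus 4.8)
The plan is to prove the three assertions of Lemma~\ref{lem:parabolic} in order, as the later parts build on the earlier ones, and to reduce everything to the defining quadratic relation $(H_j - q_j^{-1})(H_j + q_j) = 0$ together with elementary facts about minimal coset representatives.

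For part (1), I would first establish the auxiliary identity $H_x H_j = H_{xs_j}$ whenever $x \in W_J$ and $\ell(xs_j) = \ell(x)+1$, and $H_x H_j = H_{xs_j} + (q_j^{-1}-q_j)H_x$ when $\ell(xs_j) = \ell(x)-1$; these are immediate from the braid and quadratic relations. Then, writing $\eta_{_J} = q_{w_{_J}}\sum_{x\in W_J} q_x^{-1}H_x$ and multiplying on the right by $H_j$, I would split the sum over $W_J$ into the pairs $\{x, xs_j\}$ with (say) $\ell(xs_j) > \ell(x)$. For each such pair the contribution to $\eta_{_J}H_j$ is $q_x^{-1}H_{xs_j} + q_{xs_j}^{-1}\big(H_x + (q_j^{-1}-q_j)H_{xs_j}\big)$; using $q_{xs_j} = q_x q_j$ one checks this equals $q_j^{-1}\big(q_x^{-1}H_x + q_{xs_j}^{-1}H_{xs_j}\big)$, and summing over all pairs gives $\eta_{_J}H_j = q_j^{-1}\eta_{_J}$. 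This is the computational heart of the lemma, but it is routine.

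For part (2), the key structural input is the decomposition $ws_j = w s_j$ analyzed via the dichotomy for $w \in {}^J W$: exactly one of (a) $ws_j \in {}^J W$ with $\ell(ws_j) = \ell(w)+1$, (b) $ws_j \in {}^J W$ with $\ell(ws_j) = \ell(w)-1$, or (c) $ws_j \notin {}^J W$, in which case $ws_j = s_k w$ for some $k \in J$ with $\ell(ws_j) = \ell(w)+1$ (this is the standard exchange-condition fact about double cosets, e.g. from Deodhar or Lusztig's book). In cases (a) and (b) the claimed formulas follow directly from $H_w H_j = H_{ws_j}$ (resp. $H_w H_j = H_{ws_j} + (q_j^{-1}-q_j)H_w$) since $\ell$ is additive along ${}^J W$, so $H_w$ and then $H_j$ compose without shuffling. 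In case (c), $\eta_{_J}H_w H_j = \eta_{_J} H_{s_k w} = \eta_{_J} H_k H_w$ (additivity again, as $\ell(s_k w) = \ell(w)+1$ and $s_k \in W_J$ while $w \in {}^J W$), and then part (1) gives $\eta_{_J}H_k H_w = q_k^{-1}\eta_{_J}H_w = q_j^{-1}\eta_{_J}H_w$ since $q_j = q_k = q$ when $j,k \geq 1$, and $q_j = q_k = p$ when $j = k = 0$ (note $s_0$ can only be conjugated to $s_0$). I expect case (c), and in particular justifying $ws_j = s_k w$ with $k \in J$ and the matching of parameters $q_j = q_k$, to be the main obstacle, since it requires care with the combinatorics of $W_J \backslash W$.

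For part (3), I would apply the bar involution to $\eta_{_J} = q_{w_{_J}}\sum_{x \in W_J} q_x^{-1} H_x$. Since $\overline{H_x} = H_{x^{-1}}^{-1}$ and $\overline{q_{w_{_J}}} = q_{w_{_J}}^{-1}$, $\overline{q_x^{-1}} = q_x$, and since $x \mapsto x^{-1}$ permutes $W_J$ with $q_{x^{-1}} = q_x$, we get $\overline{\eta_{_J}} = q_{w_{_J}}^{-1}\sum_{x \in W_J} q_x H_x^{-1}$. Now it suffices to show this equals $\eta_{_J}$; equivalently, multiplying the target identity $\overline{\eta_{_J}} = \eta_{_J}$ through, that $\eta_{_J}$ is bar-invariant. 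A clean route: observe that $\overline{\eta_{_J}}$ also satisfies $\overline{\eta_{_J}} H_j = q_j^{-1}\overline{\eta_{_J}}$ for all $j \in J$ — this follows by applying bar to part (1) after noting $\overline{\eta_{_J} H_j} = \overline{H_j}\,\overline{\eta_{_J}} = H_j^{-1}\overline{\eta_{_J}}$ and that the quadratic relation gives $H_j^{-1} = H_j + (q_j^{-1}-q_j)$, hence $\overline{H_j}\,\overline{\eta_{_J}}$-manipulations reduce to the same eigenvalue equation, possibly after also using left-handed analogues of (1). Then any element $\xi$ of $\mc H_J$ with $\xi H_j = q_j^{-1}\xi$ for all $j \in J$ is a scalar multiple of $\eta_{_J}$ (this is the standard one-dimensionality of the "sign-type" ideal; prove it by induction on $|J|$ or by expanding $\xi$ in the basis $\{H_x\}$ and using the relations to show all coefficients are determined by the coefficient of $H_{w_{_J}}$). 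Comparing the coefficient of $H_{w_{_J}}$ on both sides — it is $q_{w_{_J}} \cdot q_{w_{_J}}^{-1} = 1$ in $\eta_{_J}$, and one checks it is also $1$ in $\overline{\eta_{_J}}$ after rewriting each $H_x^{-1}$ in the $\{H_y\}$ basis (the top term $H_{w_{_J}}$ arises only from $x = w_{_J}$ with coefficient $q_{w_{_J}}^{-1} \cdot q_{w_{_J}} = 1$) — forces the scalar to be $1$, giving $\overline{\eta_{_J}} = \eta_{_J}$.
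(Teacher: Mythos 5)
Your argument is correct, and for parts (1) and (2) it is exactly what the paper means by its one-line proof: part (1) is the direct pairing computation over $\{x, xs_j\}$ using $q_{xs_j}=q_xq_j$, and part (2) is deduced from (1) together with the standard dichotomy for $w\in {}^J W$ (Deodhar's lemma), where in the case $ws_j\notin {}^JW$ one has $ws_j=s_kw$ with $k\in J$ and $\ell(ws_j)=\ell(w)+1$; your observation that $q_j=q_k$ is the one point needing care, and it holds either by your conjugacy-class remark (in $W_{B_m}$ the simple reflection $s_0$ is conjugate to no other simple reflection) or, more cheaply, by comparing $q_{ws_j}=q_wq_j$ with $q_{s_kw}=q_kq_w$. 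Where you genuinely diverge from the paper is part (3): the paper simply refers to Lusztig \cite[\S12]{L03}, whereas you give a self-contained proof via the right-eigenvector characterization of $\eta_J$ (one-dimensionality of the sign-type ideal in $\mc{H}_J$) plus a comparison of the coefficient of $H_{w_J}$, using that $H_x^{-1}=H_{x^{-1}}+(\text{lower terms})$. That argument is sound and has the advantage of keeping the lemma independent of \cite{L03}; the paper's citation buys brevity. One small correction: the bar involution here is an algebra \emph{automorphism}, not an anti-automorphism, so $\overline{\eta_J H_j}=\overline{\eta_J}\,H_j^{-1}$ rather than $H_j^{-1}\overline{\eta_J}$; applying bar to (1) then gives the right-handed eigenvalue property $\overline{\eta_J}H_j=q_j^{-1}\overline{\eta_J}$ directly, so the hedge about needing a left-handed analogue of (1) is unnecessary and the rest of your part (3) goes through unchanged.
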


\begin{proof}
Part~$(1)$ is proved by a direct calculation, and $(2)$ follows from $(1)$. 
Part~ (3)  can be (essentially)  found in \cite[\S12]{L03}.
\end{proof}

\subsection{The $\LL$-bases}

Recall \cite[\S3.1]{L03}
that a map $\LL : \WBm \longrightarrow \Z$ is called a {\em weight function} if it satisfies
\begin{align}
\LL(y  w) = \LL(y) + \LL(w) \nonumber
\end{align}
for all $y, w \in \WBm$ such that $\ell(y w) = \ell(y) + \ell(w)$.
(Such a weight function is determined by values $\LL(s_0)$ and $\LL(s_1)$.)

We fix a weight function $\LL$. If $m=1$, we adopt the convention that $\LL(s_1)=1$. Let $v$ be an indeterminate. 
We consider a $\Q$-algebra homomorphism 
\[
v^\LL : \Q(p,q) \longrightarrow \Q(v),
\qquad p \mapsto v^{\LL(s_0)}, \ q \mapsto v^{\LL(s_1)}.
\] 
We shall regard $\Q(v)$ as a $\Q(p,q)$-module via the $\Q$-algebra homomorphism $v^\LL$.
By a base change, we introduce the following algebras/spaces over $\Q(v)$:
\begin{equation}
 \HBm^{\LL} = \HBm \otimes_{\Q(p,q)} \Q(v),
\quad \Ui_{\LL}  = \Ui \otimes_{\Q(p,q)} \Q(v),
\quad \VV_\LL = \VV \otimes_{\Q(p,q)} \Q(v).
\end{equation}
We shall use the old notations of the generators of $\HBm$ for generators of $\HBm^\LL$ (and similarly for $\Ui_\LL$ and $\VV_\LL$). The bar involution on   $ \HBm$ (as well as on $\Ui$ and $\VV$) induces a bar involution  on $ \HBm^{\LL}$  (as well as on $\Ui_\LL$ and $\VV_\LL$) such that $\overline{v} = v^{-1}$.

The following is a straightforward variant of Lusztig \cite[Theorem 5.2]{L03} 
(who treats the regular representation, i.e., the $J=\emptyset$ case).

\begin{prop}  \label{KLB}
\cite{L03, Deo87} 
Let  $J \subseteq \{ 0, 1, \ldots, m-1 \}$. Then, for each $w \in {}^J W$, there exists a unique element $C^J_w \in \eta_{_J} \HBm^\LL$ such that
\begin{enumerate}
\item $\overline{C^J_w} = C^J_w$,
\item 
$
C^J_w \in  \eta_{_J} \big(H_w + \sum_{{y \in {}^J W, \, y < w}} v \Z[v] H_y \big). 
$
\end{enumerate}
Moreover, the elements $\{C^J_w~ \vert~ w \in {}^J W\}$ forms a $\Q(v)$-basis of $\eta_{_J} \HBm^\LL$ (called the $\LL$-basis).
\end{prop}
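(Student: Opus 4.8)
The plan is to adapt Lusztig's standard existence-and-uniqueness argument for Kazhdan--Lusztig type bases to the parabolic module $\eta_{_J}\HBm^\LL$, using the combinatorics of ${}^J W$ recorded in Lemma~\ref{lem:parabolic}. First I would set up notation: write $m_w := \eta_{_J} H_w$ for $w \in {}^J W$, so that by Lemma~\ref{lem:parabolic}(2) these elements form a $\Q(v)$-basis of $\eta_{_J}\HBm^\LL$, and the right $\HBm^\LL$-action on them is triangular with respect to the Bruhat order on ${}^J W$ with the same structure constants as in the full Hecke algebra (up to the $q_j^{-1}\eta_{_J} H_w$ absorption when $ws_j\notin{}^JW$). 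Next I would record that the bar involution preserves $\eta_{_J}\HBm^\LL$: this is immediate from Lemma~\ref{lem:parabolic}(3), since $\overline{\eta_{_J} h} = \overline{\eta_{_J}}\,\overline h = \eta_{_J}\overline h$. One then checks, exactly as in \cite[\S5]{L03}, that $\overline{m_w} \in m_w + \sum_{y<w,\, y\in{}^JW} \Z[v,v^{-1}] m_y$; this follows by induction on $\ell(w)$ using $m_w = m_{w'}\cdot H_j$ for a suitable $w'<w$ with $w'\in{}^JW$, $w'<w's_j=w$, together with $\overline{H_j} = H_j + (q_j^{-1}-q_j)$ and the induction hypothesis. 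In particular $\overline{\phantom{m}}$ is an involution on $\eta_{_J}\HBm^\LL$ whose matrix in the basis $\{m_w\}$ is unitriangular.

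Given this, the existence and uniqueness of $C^J_w$ is the usual formal lemma (see \cite[\S1]{KL79}, \cite[\S5]{L03}): for any module with a $\Z[v,v^{-1}]$-basis indexed by a poset on which a bar involution acts unitriangularly (with lower-order correction terms), there is a unique bar-invariant element of the form $m_w + \sum_{y<w} v\Z[v]\, m_y$. I would either cite this formal lemma directly or reprove it in two lines by induction on $\ell(w)$: writing $\overline{m_w} = \sum_{y\le w} r_{y,w} m_y$ with $r_{w,w}=1$, one solves for $C^J_w = m_w + \sum_{y<w} p_{y,w} m_y$ with $p_{y,w}\in v\Z[v]$ by descending induction on $\ell(y)$, the condition $\overline{C^J_w}=C^J_w$ forcing $p_{y,w} - \overline{p_{y,w}}$ to equal a known expression in the already-determined $p_{z,w}$ ($y<z\le w$) and the $r_{y,w}$, and such an equation has a unique solution with $p_{y,w}\in v\Z[v]$ since $p_{y,w}-\overline{p_{y,w}}$ ranges bijectively over $v\Z[v]+v^{-1}\Z[v^{-1}]$. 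Finally, the fact that $\{C^J_w \mid w\in{}^JW\}$ is a $\Q(v)$-basis is immediate from unitriangularity of the change of basis from $\{m_w\}$.

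The only genuinely new point compared to \cite{L03} — and the step I expect to need the most care — is verifying that $\eta_{_J}\HBm^\LL$ is exactly the span of $\{m_w : w\in{}^JW\}$ and that these are $\Q(v)$-linearly independent, so that the matrix manipulations above take place in the right space; this is precisely what Lemma~\ref{lem:parabolic}(1)--(2) is for, since (1) shows $\eta_{_J} H_x = q_{x}^{-1}\eta_{_J}$ (up to the normalisation in \eqref{eq:eta}) kills the $W_J$-directions and (2) shows the $m_w$ are closed under and freely permuted by the right action, matching Deodhar's parabolic setup \cite{Deo87}. Everything else is a verbatim transcription of Lusztig's argument with $q$-powers replaced by the monomials $q_w = \prod q_{i_j}$ and bar-invariance of $\eta_{_J}$ supplied by Lemma~\ref{lem:parabolic}(3); in particular the weight function $\LL$ enters only through the specialisation $v^\LL$, under which all the integrality statements ($v\Z[v]$, $\Z[v,v^{-1}]$) are preserved because $\LL$ takes nonnegative values on $s_0$ and $s_1$ is not actually needed — one works over $\Q(p,q)$ first, where the coefficients lie in $\Z[p^{\pm1},q^{\pm1}]$ with the analogous triangularity, and then applies $v^\LL$. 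I would phrase the proof so as to make the reduction to \cite[Theorem~5.2]{L03} and \cite{Deo87} explicit, indicating the two modifications (parabolic truncation via $\eta_{_J}$, and the two-parameter bookkeeping) rather than reproducing the induction in full.
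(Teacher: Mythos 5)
Your proposal is correct and follows essentially the same route as the paper, which simply observes that, thanks to Lemma~\ref{lem:parabolic}, the argument is verbatim the standard parabolic Kazhdan--Lusztig construction of \cite[Propositions~3.1, 3.2]{Deo87} (in Lusztig's unequal-parameter form \cite{L03}); you have merely written out that standard induction in detail. One tiny imprecision: the map $p \mapsto p - \overline{p}$ on $v\Z[v]$ is a bijection onto the \emph{antisymmetric} elements of $\Z[v,v^{-1}]$ (not onto all of $v\Z[v]+v^{-1}\Z[v^{-1}]$), and one must note that the right-hand side of your recursion is antisymmetric because the bar map is an involution -- which is exactly the standard argument you are invoking.
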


\begin{proof}
Thanks to Lemma~\ref{lem:parabolic}, the proof is the same as for the usual \texttt{KL}-setting {\cite[Propositions~ 3.1, 3.2]{Deo87}}. 
\end{proof}

\begin{rem}
We write $C^J_w =C^{\LL,J}_w$  to emphasize the dependence on the weight function $\LL$. 
By replacing $H_0$ and $H_i \ (i \ge 1)$ by $e_0 H_0$ and $e_1 H_i$, respectively, where $e_0, e_1 \in \{ 1, -1 \}$, 
one obtains an isomorphism $\HBm^\LL\cong \HBm^{\LL'}$, where $\LL'$ is the weight function determined by 
$\LL'(s_0) = e_0 \LL(s_0)$ and $\LL'(s_1) = e_1 \LL(s_1)$.
Moreover, one checks that the isomorphism is compatible with bar involutions, and
it sends the $\LL$-basis to the $\LL'$-basis up to sign, that is, $C^{\LL,J}_w \mapsto (-1)^{\ell(w)} C^{\LL',J}_w$. 
This observation is valid in the general setting of \cite{L03}. 
Therefore we may assume that a weight function is nonnegative integer valued if needed.
\end{rem}

\subsection{The $\imath$-canonical bases}
  \label{sec:iCB}

It is well known that there exists a bar-involution $\psi$ on the tensor product of several simple finite-dimensional $\U_\LL$-modules,
such as $\VV_\LL^{\otimes m}$, using the quasi-$R$-matrix $\Theta$ ({\cite[Chap.~ 4]{L94}}). 
Following \cite[Proposition~3.10]{BW13} we can define another anti-linear (i.e., $v\mapsto v^{-1}$) 
involution on $\VV_\LL^{\otimes m}$ as (recall the definition of $\Upsilon$ in Proposition~\ref{prop:intetwiner})
\begin{equation}
  \label{eq:ipsi}
\psi_\imath = \Upsilon \circ \psi. 
\end{equation}
By construction, $\psi_\imath$ is well defined on $\VV_\LL^{\otimes m}$ and fixes all $M_f$ such that $0 < f(1) \leq f(2) \leq \cdots \leq f(m)$.

\begin{rem}
We use the same notation $\psi$ (as in \cite{L94}) for both the anti-linear involution on $\U$ (as well as the specialization $\U_\LL$) and the anti-linear involution on the  $\U$-module $\VV^{\otimes m}$ (as well as the $\U_\LL$-module $\VV_\LL^{\otimes m}$), since they are compatible.
Similarly we use the same notation $\psi_\imath$ in a multiple of settings.
\end{rem}

To develop a theory of $\imath$-canonical basis, besides the new bar involution $\psi_\imath$, we also
need to establish the integrality of the intertwiner $\Upsilon$. 
The following is an $\LL$-variant of \cite[Theorem~4.18]{BW13}. 
\begin{prop}\label{prop:integralform}
Let $\mathcal{A} = \Z[v,v^{-1}]$ and ${}_{\mathcal{A}}\U$ be the $\mathcal{A}$-form of $\U$. Then, we have $\Upsilon_\mu \in {}_{\mathcal{A}}\U$ for any $\mu \in \N[\mathbb{I}]$.
\end{prop}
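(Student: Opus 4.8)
The plan is to follow the strategy of \cite[Theorem~4.18]{BW13}, establishing the integrality of each $\Upsilon_\mu$ by induction on $\hgt(\mu)$ using the defining intertwining relation from Proposition~\ref{prop:intetwiner}. Recall that $\Upsilon = \sum_\mu \Upsilon_\mu$ is characterized by $\psi_\imath(u)\,\Upsilon = \Upsilon\,\psi(u)$ for all $u \in \Ui$, and that this relation, when unwound on the generators $\be_i$, $\bff_i$, $\bk_i$, $\bt$ of $\Ui$, yields recursive formulas expressing $\Upsilon_\mu$ in terms of the $\Upsilon_\nu$ with $\hgt(\nu) < \hgt(\mu)$. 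The base case $\Upsilon_0 = 1 \in {}_{\mathcal{A}}\U$ is immediate.

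First I would record the recursion explicitly. Comparing weight components of $\psi_\imath(u)\Upsilon = \Upsilon\psi(u)$ for $u$ ranging over the generators, one obtains (as in \cite{BW13}) identities of the shape: for the generator $\bk_i$ the relation is automatic on weight grounds; for $\be_i$, $\bff_i$ one gets relations involving $F_{\pm i}$ and $E_{\pm i}$ acting on $\widehat{\U}^-$ via the commutation of $E_j$ with $\U^-$ inside $\U$; and the key new ingredient here is the generator $\bt = E_0 + qF_0K_0^{-1} + \frac{p-p^{-1}}{q-q^{-1}}K_0^{-1}$. Writing out $\psi_\imath(\bt)\Upsilon = \Upsilon\psi(\bt)$ and using $\psi_\imath(\bt) = \bt$, $\psi(\bt) = E_0 + q^{-1}F_0K_0 + \frac{p^{-1}-p}{q^{-1}-q}K_0$, one extracts a recursive formula for the components $\Upsilon_\mu$ with $\mu$ having a nonzero $\alpha_0$-coordinate, in which the coefficients are manifestly elements of $\Z[q,q^{-1},p,p^{-1}]$ — note that the apparently dangerous denominator $q-q^{-1}$ in $\bt$ drops out because $\frac{p-p^{-1}}{q-q^{-1}} - \frac{p^{-1}-p}{q^{-1}-q}\cdot(\text{something}) $ recombines into a Laurent polynomial; this is precisely the ``nontrivial 2-parameter formula'' the introduction warns about, so I would verify it carefully. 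After applying the base-change homomorphism $v^\LL$, the coefficients land in $\Z[v,v^{-1}] = \mathcal{A}$.

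The inductive step then runs as follows: assume $\Upsilon_\nu \in {}_{\mathcal{A}}\U$ for all $\nu$ with $\hgt(\nu) < \hgt(\mu)$. The recursion expresses $(q_i - q_i^{-1})\Upsilon_\mu$ (or an analogous expression with $\bt$, giving a Laurent-polynomial multiple of $\Upsilon_\mu$) in terms of lower-height components multiplied by the divided-power-friendly operators $F_i^{(n)}$, $E_i$, together with Laurent-polynomial coefficients, all of which preserve ${}_{\mathcal{A}}\U$ by definition of the integral form. This shows $(q_i - q_i^{-1})\Upsilon_\mu$, and symmetrically expressions from the other generators, lie in ${}_{\mathcal{A}}\U$; combining the information from all generators (exactly as in \cite{BW13}, where one uses that the various relations together pin down $\Upsilon_\mu$ without leftover denominators) gives $\Upsilon_\mu \in {}_{\mathcal{A}}\U$ itself. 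I would then invoke the already-established uniqueness in Proposition~\ref{prop:intetwiner} to conclude that this integral solution is \emph{the} intertwiner.

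The main obstacle I anticipate is bookkeeping the $\bt$-recursion: unlike the type-A divided-power arguments, the generator $\bt$ mixes $E_0$, $F_0K_0^{-1}$, and a scalar term, so the comparison of weight components produces a three-term recursion whose clearing of the $q-q^{-1}$ denominator (and the absence of any $p$-denominator) must be checked by hand. Once that single computation is in place, the rest is a routine transcription of the \cite{BW13} induction, since the type-A parts of $\Ui$ (the $\be_i$, $\bff_i$, $\bk_i$) contribute exactly as there, and the specialization $v^\LL$ only sends one Laurent polynomial ring into another. Note that $p=1$, $p=q$ specializations are subsumed, consistent with \cite{B16,BW13}.
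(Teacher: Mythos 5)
Your high-level plan (follow \cite[Theorem~4.18]{BW13}, isolate the $\bt$-direction as the genuinely new two-parameter input, and induct) agrees with the paper in outline, but the step carrying all the weight is exactly the one you pass over. The paper does \emph{not} run a direct induction on $\hgt(\mu)$ over all generators: it invokes the strategy of \cite[Theorem~4.18]{BW13} precisely to \emph{reduce} the whole integrality statement to the rank-one case $\I=\{0\}$ (the counterpart of \cite[Lemma~4.6]{BW13}), and then settles that single case by writing $\Upsilon_{c\alpha_0}=\gamma_c$ times a divided power and deriving the explicit recursion
\begin{equation*}
\gamma_{c+1} = -\big(v^{\LL(s_1)}-v^{-\LL(s_1)}\big)v^{-c\LL(s_1)}\Big(v^{\LL(s_1)}[c]_{v^{\LL(s_1)}}\gamma_{c-1}
+\frac{v^{\LL(s_0)}-v^{-\LL(s_0)}}{v^{\LL(s_1)}-v^{-\LL(s_1)}}\gamma_c\Big),
\end{equation*}
whose only denominator is cancelled by the prefactor, so induction on $c$ gives $\gamma_c\in\Z[v,v^{-1}]$. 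Your substitute for the reduction step is a genuine gap: from each generator relation you only get statements of the shape ``$(q_i-q_i^{-1})\cdot(\text{something built from }\Upsilon_\mu)$ is integral,'' i.e.\ integrality (with divisibility) of the images of $\Upsilon_\mu$ under the skew derivations $r_i,{}_ir$, and the claim that ``combining the information from all generators pins down $\Upsilon_\mu$ without leftover denominators'' is unsubstantiated --- every relation produces the same scalar factor $q-q^{-1}$ (resp.\ $p-p^{-1}$ from the $\kappa$-term), so no combination of them improves the denominator, and passing from integrality of $r_i(\Upsilon_\mu),{}_ir(\Upsilon_\mu)$ to integrality of $\Upsilon_\mu$ itself inside the divided-power integral form requires an additional argument you do not supply. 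That missing argument is exactly what the reduction-to-rank-one in \cite{BW13} replaces, which is why the paper simply cites it.

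Moreover, the one computation that is actually new in the two-parameter setting --- the recursion displayed above --- is left unperformed in your proposal (``I would verify it carefully''), and the heuristic you offer for the cancellation is not the real mechanism: since $\frac{p^{-1}-p}{q^{-1}-q}=\frac{p-p^{-1}}{q-q^{-1}}$, the recombination you describe is vacuous; what actually happens is that the $\kappa$-term of $\bt$ enters the recursion already multiplied by the factor $-(v^{\LL(s_1)}-v^{-\LL(s_1)})v^{-c\LL(s_1)}$ produced by commuting $E_0$ past divided powers of $F_0$, and this kills the denominator $q-q^{-1}$ of $\kappa$; no $p$-denominator ever appears. To repair the proposal: cite \cite[Theorem~4.18]{BW13} for the reduction to $\I=\{0\}$ (it carries over verbatim, as the paper notes), carry out the rank-one recursion above, and conclude by induction on $c$; also note that the final appeal to the uniqueness in Proposition~\ref{prop:intetwiner} is unnecessary, since one is estimating the components $\Upsilon_\mu$ of the intertwiner itself, not constructing a second solution.
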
 

\begin{proof}
Following the strategy of the proof of \cite[Theorem~4.18]{BW13}, 
the proof of the integrality of $\Upsilon$ is reduced 
to verifying that the intertwiner is integral for the case $\mathbb{I} = \{0\}$ (which is the counterpart of \cite[Lemma~4.6]{BW13}). 

We write $\Upsilon_c = \Upsilon_{c\alpha_0} = \gamma_c E^{(c)}$ for $c \ge 0$. Note that $\gamma_0=1$ by definition. The same computation as \cite[Lemma~4.6]{BW13} shows that 
\begin{align*}
\gamma_{c+1} &= - \big(v^{\LL(s_1)}-v^{-\LL(s_1)} \big) v^{-c \LL(s_1)} 
\Big(v^{\LL(s_1)} [c]_{v^{\LL(s_1)}} \gamma_{c-1} + \frac{v^{\LL(s_0)}-v^{-\LL(s_0)}}{v^{\LL(s_1)}-v^{-\LL(s_1)}}\gamma_c \Big)\\
&= - \big(v^{\LL(s_1)}-v^{-\LL(s_1)} \big) v^{-c \LL(s_1)} v^{\LL(s_1)} [c]_{v^{\LL(s_1)}} \gamma_{c-1} 
- v^{-c \LL(s_1)} \big({v^{\LL(s_0)}-v^{-\LL(s_0)}} \big)\gamma_c,
\end{align*}
 where
\[
[c]_{v^{\LL(s_1)}} = \frac{v^{c \LL(s_1)}-v^{-c\LL(s_1)} }{v^{\LL(s_1)}-v^{-\LL(s_1)}} \in \Z[v,v^{-1}].
\]
Hence the proposition follows by induction on $c$.
\end{proof}

Following \cite[Theorem~4.26]{BW13} (or \cite{BW16} for more general quantum symmetric pairs with parameters), 
we obtain the $\imath$-canonical bases on 
finite-dimensional simple $\U$-modules and their tensor products. Let us just formulate a special case which we need later on
in the our general weight function $\LL$ setting. 

For $f\in I^m$, define a weight $\texttt{wt}(f) =\sum_{1\le i \le m} \varepsilon_{_{f(i)}} \in \Lambda$.
Let $\theta$ be the involution of the weight lattice $\Lambda$ such that 
\[
\theta(\varepsilon_{i-\hf}) = - \varepsilon_{-i+\hf}, \quad \text{ for all } i \in \I. 
\]
We say two weights $\la, \mu \in \Lambda$ have {\em identical $\imath$-weight} (and denote $\la \equiv_\imath \mu$)
if $\la-\mu$ is fixed by $\theta$.
Define a partial ordering $\preceq$ on the set $I^m$ as follows (cf. \cite[proof of Theorem~5.8]{BW13}):  for $g,f \in I^m$, we let
\begin{equation}  \label{eq:order}
g \preceq f \quad \Leftrightarrow \quad
 \texttt{wt}(g) \equiv_\imath \texttt{wt}(f)  \;
\text{ and } \;  \texttt{wt}(f) - \texttt{wt}(g) \in \N \Pi.
\end{equation} 
We say $g \prec f$ if $g \preceq f$ and $g \neq f$.

\begin{prop}\label{prop:iCBL}
The $\Ui_\LL$-module $\VV_\LL^{\otimes m}$ admits a unique basis $\{b^\imath_f~|~f\in I^m \}$ such that $b^\imath_f$ is $\psi_\imath$-invariant and $b^\imath_f \in M_f +\sum_{g\prec f} v\Z[v] M_g$. 
\end{prop}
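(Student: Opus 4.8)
The plan is to follow the standard ``canonical basis via bar-invariance plus unitriangularity'' argument, now with the new bar involution $\psi_\imath$ and with the partial order $\preceq$ of \eqref{eq:order}. The two inputs one needs are: (i) that $\psi_\imath$ is a well-defined anti-linear involution on $\VV_\LL^{\otimes m}$ (already noted after \eqref{eq:ipsi}, using $\psi_\imath^2 = \mathrm{id}$, which follows from the defining property of $\Upsilon$ in Proposition~\ref{prop:intetwiner}); and (ii) that $\psi_\imath$ is \emph{unitriangular} with respect to the monomial basis $\{M_f\}$ and the order $\preceq$, i.e. $\psi_\imath(M_f) \in M_f + \sum_{g\prec f} \Z[v,v^{-1}] M_g$. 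Granting (i) and (ii), the proposition is immediate from Lusztig's standard lemma (cf. \cite[Lemma~24.2.1]{L94} or \cite[Lemma~4.24]{BW13}): for a poset-finite unitriangular anti-linear involution on a free $\mathcal{A}$-module, there is a unique bar-invariant basis lying in $M_f + \sum_{g\prec f} v\Z[v] M_g$, constructed by the usual recursive clearing of denominators. So the real content is reducing to (i) and (ii).

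For (ii), I would argue as in \cite[\S4--5]{BW13}. The bar involution $\psi$ built from the quasi-$R$-matrix $\Theta$ on $\VV_\LL^{\otimes m}$ satisfies $\psi(M_f)\in M_f + \sum_{g} \Z[v,v^{-1}]M_g$ where the $g$ that occur satisfy $\texttt{wt}(g)-\texttt{wt}(f)\in \N\Pi$ acting within a fixed weight space on the last tensor legs; more precisely $\Theta$ lies in $\sum_\mu \U^-_{-\mu}\otimes\U^+_\mu$ and each application lowers weight on one factor and raises it on a later one, keeping the total weight fixed, so $\texttt{wt}(g)=\texttt{wt}(f)$ for all terms produced by $\psi$ alone. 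Then $\Upsilon \in \widehat{\U}^-$ further lowers weights by elements of $\N\Pi$, acting diagonally on $\VV_\LL^{\otimes m}$, so that $\psi_\imath(M_f) = \Upsilon\psi(M_f)$ is a combination of $M_g$ with $\texttt{wt}(f)-\texttt{wt}(g)\in\N\Pi$. The $\imath$-weight condition $\texttt{wt}(g)\equiv_\imath \texttt{wt}(f)$ holds because $\psi_\imath$ intertwines with the $\Ui$-action and $\Ui$ preserves $\theta$-fixed weight data (equivalently, this is exactly the statement that $\psi_\imath$ is defined on the $\imath$-weight blocks); combined with $\texttt{wt}(f)-\texttt{wt}(g)\in\N\Pi$ this gives $g\preceq f$. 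The leading coefficient is $1$ because $\Theta$ and $\Upsilon$ have constant term $1$. Integrality of the coefficients (that they lie in $\Z[v,v^{-1}]$, not just $\Q(v)$) is where Proposition~\ref{prop:integralform} is used, together with the known integrality of $\Theta$ on the $\mathcal{A}$-forms \cite[\S24.1]{L94}: $\psi_\imath$ preserves the $\mathcal{A}$-lattice $\bigoplus_f \mathcal{A}M_f$.

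I expect the main obstacle to be a clean bookkeeping of (ii): verifying that $\psi_\imath$ is genuinely \emph{finite} and unitriangular on $\VV_\LL^{\otimes m}$ with respect to $\preceq$, i.e. that the infinite sum $\Upsilon = \sum_\mu \Upsilon_\mu$ truncates when applied to any $M_f$ (true since $\VV_\LL$, hence $\VV_\LL^{\otimes m}$, is finite-dimensional and weights are bounded below), and that $\preceq$ restricted to each $\imath$-weight block is a finite poset (true since $\{g : \texttt{wt}(f)-\texttt{wt}(g)\in\N\Pi\}\cap I^m$ is finite). Once these finiteness facts are in hand, the existence-and-uniqueness mechanism is entirely formal. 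Since all of this is a routine $\LL$-specialization of \cite[Theorem~4.26, Theorem~5.8]{BW13}, I would state the proof concisely: ``By Proposition~\ref{prop:integralform} and the integrality of $\Theta$, $\psi_\imath$ preserves the $\mathcal A$-lattice $\bigoplus_f\mathcal A M_f$; by the weight considerations above it is unitriangular with respect to $\preceq$ with leading coefficient $1$; since each $\imath$-weight block is a finite poset, the claim follows from the standard construction of \cite[Lemma~24.2.1]{L94} (cf. \cite[Theorem~4.26]{BW13}).''
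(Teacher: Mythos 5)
Your overall skeleton (integrality of $\Upsilon$ via Proposition~\ref{prop:integralform}, then a bar-invariant-basis lemma) is the same as the paper's, but there is a genuine gap in the step where you invoke \cite[Lemma~24.2.1]{L94} directly for the monomial basis $\{M_f\}$ with the order $\preceq$ of \eqref{eq:order}. That relation is only a preorder on $I^m$: any two distinct $f,g$ with $\texttt{wt}(f)=\texttt{wt}(g)$ satisfy both $g\prec f$ and $f\prec g$. And such same-weight terms genuinely occur: as you yourself note, the quasi-$R$-matrix part of $\psi$ produces $M_g$ with $\texttt{wt}(g)=\texttt{wt}(f)$, $g\neq f$ (already for $m=2$, e.g.\ $\psi(u_a\otimes u_b)$ with $a<b$ involves $u_b\otimes u_a$). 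So what you have proved is ``triangularity'' with respect to a relation that is not antisymmetric, and the recursion underlying Lusztig's lemma (induction along a poset with finite lower intervals and strictly lower off-diagonal terms) is not available; the same issue infects your uniqueness argument. The weight bookkeeping alone cannot repair this, since it never separates distinct monomials of equal weight.

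The missing ingredient is exactly what the paper's proof supplies: apply \cite[Lemma~24.2.1]{L94} not to $\{M_f\}$ but relative to Lusztig's canonical basis of $\VV_\LL^{\otimes m}$, which is fixed by $\psi$; then $\psi_\imath(b)=\Upsilon\, b \in b+\sum(\text{terms of strictly lower weight})$, so the triangularity is strict with respect to the genuine partial order ``weight difference in $\N\Pi\setminus\{0\}$'' (this is \cite[Theorem~4.25]{BW13} in the based-module setting), and one then transfers the expansion to the $M_f$'s and obtains the condition $g\prec f$ by the arguments of \cite[proof of Theorem~5.8]{BW13} (using that canonical basis elements are unitriangular in the monomials within a weight space, and that only $\theta$-fixed $\mu$ contribute to $\Upsilon$, or equivalently your $\bk_i$-eigenvalue argument). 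Alternatively you could stay with the monomial basis, but then you must refine $\preceq$ within each weight space by the Bruhat-type order on $I^m$ and quote the known strict triangularity of the $\Theta$-bar involution with respect to it (the type $A$ fact, cf.\ \cite{FKK} or \cite[Ch.~27]{L94}); either way this extra input has to be stated, and as written your proposal does not contain it.
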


\begin{proof}
This is a straightforward $\LL$-generalization of \cite{BW13}, and let us outline the proof for the convenience of the reader. 
By Proposition~\ref{prop:integralform}, $\Upsilon$ and hence also the bar involution $\psi_\imath$ \eqref{eq:ipsi} preserve the integral form of $\VV_\LL^{\otimes m}$, i.e. the $\Z[v,v^{-1}]$-span of $\{M_f \vert f \in I^m\}$. The existence of a $\psi_\imath$-invariant basis $\{b^\imath_f~|~f\in I^m \}$ in the $\Z[v]$-span of Lusztig's canonical basis on $\VV_\LL^{\otimes m}$ follows by applying \cite[Lemma~24.2.1]{L94} (as we showed in \cite[Theorem~4.25]{BW13} in a general based $\U$-module setting). The partial order as stated in the proposition follows from arguments in \cite[proof of Theorem~5.8]{BW13}. 
\end{proof}

\begin{definition}
We call the basis $\{b^\imath_f~|~f\in I^m \}$ constructed in Proposition~\ref{prop:iCBL}  the $\imath$-canonical basis  of $\VV_\LL^{\otimes m}$.
\end{definition}

\begin{rem}  \label{rem:1} 
For each $f\in I^m$, $b^\imath_f$ is the unique element in $\VV_\LL^{\otimes m}$ which is $\psi_\imath$-invariant 
such that $b^\imath_f \in M_f +\sum_{g} v\Z[v] M_g$ (without the partial ordering condition on $g$).
\end{rem}

\subsection{The $\imath$-canonical bases and $\LL$-bases}
  \label{sec:iCB=LB}
  
The double centralizer property in Theorem~\ref{thm:UiHB} specializes to the following double centralizing actions:
\begin{equation*}
\Ui_\LL \; \stackrel{\Phi}{\curvearrowright} \;
\VV^{\otimes m}_\LL \; \stackrel{\Psi}{\curvearrowleft}
\; \HBm^\LL.
\end{equation*}
The following is an $\LL$-variant of \cite[Theorem 5.8]{BW13} (where $\LL(s_0)=\LL(s_1)=1$).
The original proof works here, which uses Lemma~\ref{lem:H0}. 

\begin{prop}   \label{prop:samebar}
The anti-linear bar involution $\psi_\imath: \VV_\LL^{\otimes m} \rightarrow \VV_\LL^{\otimes m}$  is compatible 
with both the bar involution of $\mc{H}^\LL_{B_m}$ and the bar involution of $\Ui_\LL$; 
that is, for all $u \in \VV_\LL^{\otimes m}$,  $h \in \HBm$, and $x \in \Ui_\LL$, we have 
$
\psi_\imath(x u h) =\psi_\imath(x) \, \psi_\imath(u) \overline{h}. \nonumber
$
\end{prop}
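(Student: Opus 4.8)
The plan is to prove the two compatibility statements separately, since the $\Ui_\LL$-compatibility is essentially built into the definition of $\psi_\imath$, while the $\HBm^\LL$-compatibility requires the identification of $H_0$ with $\mc{T}^{-1}$ from Lemma~\ref{lem:H0}.

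\textbf{Step 1: Compatibility with $\Ui_\LL$.} I would start by recalling that $\psi_\imath = \Upsilon \circ \psi$ on $\VV_\LL^{\otimes m}$, where $\psi$ is the bar involution built from the quasi-$R$-matrix. The statement $\psi_\imath(xu) = \psi_\imath(x)\psi_\imath(u)$ for $x \in \Ui_\LL$ follows from the intertwiner property in Proposition~\ref{prop:intetwiner}, namely $\psi_\imath(x)\cdot\Upsilon = \Upsilon\cdot\psi(x)$ (here $\psi_\imath$ on the left is the bar involution $\ibar$ of $\Ui$ from Lemma~\ref{lem:bar}), combined with the fact that $\psi$ on $\VV_\LL^{\otimes m}$ is already compatible with the $\U$-action twisted by $\psi$, i.e.\ $\psi(yu) = \psi(y)\psi(u)$ for $y \in \U$. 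Concretely, for $x \in \Ui_\LL \subseteq \U$ we compute $\psi_\imath(xu) = \Upsilon\psi(xu) = \Upsilon\psi(x)\psi(u) = \psi_\imath(x)\Upsilon\psi(u) = \psi_\imath(x)\psi_\imath(u)$, using the intertwiner identity in the middle step. This is exactly the argument of \cite[Proposition~3.13]{BW13}, now over $\Q(v)$.

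\textbf{Step 2: Compatibility with $\HBm^\LL$.} For the right action I would reduce to checking $\psi_\imath(uH_i) = \psi_\imath(u)\overline{H_i}$ for each generator $H_0, H_1, \ldots, H_{m-1}$, since both sides are anti-multiplicative/multiplicative in the appropriate sense and the $H_i$ generate. For $i \ge 1$, the action of $H_i$ on $\VV_\LL^{\otimes m}$ is via the type A Hecke algebra $\mc{H}_{A_{m-1}}$, and compatibility of $\psi$ (the ordinary tensor-product bar involution) with this action is classical; since $\Upsilon$ commutes with the $\U$-action up to the twist that is irrelevant for the purely type-A Hecke generators acting on adjacent factors, the compatibility passes to $\psi_\imath$. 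The genuinely new point is $H_0$, which acts only on the first tensor factor. Here I would invoke Lemma~\ref{lem:H0}, which identifies the action of $H_0$ on $\VV_\LL$ with $\mc{T}^{-1}$, where $\mc{T} = \Upsilon\circ\widetilde{\zeta}\circ T_{w_0}$. The key facts are that $\mc{T}$ is a $\Ui$-module map (Proposition~\ref{prop:mcT}), and — this is the computation one must do — that $\mc{T}^{-1}$ interacts with $\psi_\imath$ in precisely the way that matches $\overline{H_0} = H_0\inv$ on the Hecke side; this amounts to tracking how $T_{w_0}$, $\widetilde{\zeta}$, and $\Upsilon$ each transform under the bar involutions, exactly as in \cite[Theorem~5.8]{BW13}.

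\textbf{Main obstacle.} I expect the main work to be Step 2 for $H_0$: one must verify that $\psi_\imath \circ \mc{T}^{-1} = \mc{T}^{-1}\circ\psi_\imath$ (equivalently $\psi_\imath\circ\mc{T} = \mc{T}\circ\psi_\imath$) as operators on $\VV_\LL$ — or more precisely on the first tensor factor of $\VV_\LL^{\otimes m}$ — and that this equation, transported through the identification of Lemma~\ref{lem:H0}, yields the desired $\psi_\imath(uH_0) = \psi_\imath(u)\overline{H_0}$. The subtlety is that $H_0$ on $\VV_\LL^{\otimes m}$ is $\mc{T}^{-1}$ applied to the first factor, but the coideal property $\Delta(\Ui)\subseteq\Ui\otimes\U$ is what makes this a well-defined $\Ui$-intertwiner on the whole tensor product (as in the proof of Theorem~\ref{thm:UiHB}); one needs the bar-involution computation to be compatible with this first-factor localization. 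Since the new $2$-parameter and $\LL$-specialized formulas for $\Upsilon_{\alpha_0}$ and $\zeta$ have already been recorded (in the proof of Lemma~\ref{lem:H0} and Proposition~\ref{prop:integralform}), the verification is a direct if somewhat involved calculation, and no new conceptual ingredient beyond \cite[Theorem~5.8]{BW13} is required.
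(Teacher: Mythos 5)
Your overall route---reduce to the generators of $\HBm^\LL$, handle $H_i$ ($i\ge 1$) by Jimbo duality plus the classical type~A compatibility, get the $\Ui_\LL$-compatibility from the intertwiner identity, and handle $H_0$ via Lemma~\ref{lem:H0} following \cite[Theorem~5.8]{BW13}---is exactly the paper's route (the paper simply invokes that proof together with Lemma~\ref{lem:H0}). However, the key equation you single out as the thing to verify for the $H_0$ step is false, and a proof organized around it would break precisely there. You propose to check $\psi_\imath\circ\mc{T}\inv=\mc{T}\inv\circ\psi_\imath$ (equivalently $\psi_\imath\circ\mc{T}=\mc{T}\circ\psi_\imath$). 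But the Hecke bar involution sends $H_0$ to $H_0\inv=H_0+(p-p\inv)$, and $H_0$ acts as $\mc{T}\inv$; so the desired identity $\psi_\imath(uH_0)=\psi_\imath(u)\overline{H_0}$ translates into $\psi_\imath\circ\mc{T}\inv=\mc{T}\circ\psi_\imath$, i.e.\ $\psi_\imath\circ\mc{T}\circ\psi_\imath=\mc{T}\inv$: conjugation by the anti-linear involution \emph{inverts} $\mc{T}$, it does not commute with it. Already for $m=1$ (even case, with the formulas from the proof of Lemma~\ref{lem:H0}): since only $\Upsilon_{\alpha_0}=(p-p\inv)F_{\alpha_0}$ acts on $\VV$, one has $\psi_\imath(u_{\hf})=u_{\hf}$ and $\psi_\imath(u_{-\hf})=u_{-\hf}+(p-p\inv)u_{\hf}$, while $\mc{T}\inv(u_{\hf})=u_{-\hf}$; hence $\psi_\imath\mc{T}\inv(u_{\hf})=u_{-\hf}+(p-p\inv)u_{\hf}\neq u_{-\hf}=\mc{T}\inv\psi_\imath(u_{\hf})$, whereas $\mc{T}\psi_\imath(u_{\hf})=\mc{T}(u_{\hf})=u_{-\hf}+(p-p\inv)u_{\hf}$ as required. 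Your claimed commutation would yield $\psi_\imath(uH_0)=\psi_\imath(u)H_0$ instead of $\psi_\imath(u)\overline{H_0}$, which differs whenever $\LL(s_0)\neq 0$. (Your preceding sentence, that the interaction must ``match $\overline{H_0}=H_0\inv$,'' is the right idea; the displayed equation contradicts it.)

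A secondary point: for $m>1$ the involution $\psi$ on $\VV_\LL^{\otimes m}$ is built from the quasi-$R$-matrix, so passing from the corrected single-factor relation $\psi_\imath\circ\mc{T}\circ\psi_\imath=\mc{T}\inv$ on $\VV_\LL$ to the statement about $H_0$ acting on the first factor of $\VV_\LL^{\otimes m}$ is where the actual content of \cite[Theorem~5.8]{BW13} lies (the tensor-product construction of $\psi_\imath$ and its interaction with $\Ui$-equivariant operators on the first factor, using the coideal property). You flag this ``first-factor localization'' issue but defer it to \cite{BW13}; since the paper does the same, that is acceptable at the level of a sketch---but the input you feed into that argument must be the $\mc{T}$-inverting relation above, not the commutation relation you wrote.
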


Let 
\begin{equation}  \label{eq:Im}
\Ii^m_+ := \{ f \in \Ii^m \mid 0 \leq f(1) \leq f(2) \leq \cdots \leq f(m) \}. 
\end{equation}
Then, as a right $\HBm^\LL$-module, $\VV_\LL^{\otimes m}$ is decomposed as
\begin{align}  \label{eq:VV}
	\begin{split}
\VV_\LL^{\otimes m} 
= \bigoplus_{f \in \Ii^m_+} \Big( 
&\bigoplus_{w \in \, {}^{^{J(f)}}  \WBm} \Q(v) M_{f \cdot w} \Big),
\\
\omega_f: \bigoplus_{w \in \, {}^{^{J(f)}}  \WBm} & \Q(v) M_{f \cdot w} 
\stackrel{\simeq}{\longrightarrow}   \eta_{_{J(f)}} \HBm^\LL, 
\quad M_{f} \mapsto \eta_{_{J(f)}},
	\end{split}
\end{align}
where 
\[J(f) =\{ j~|~0\le j \le m-1,\; f \cdot s_j = f \}.
\]  
It follows by Lemma~\ref{lem:parabolic} and the Hecke algebra action \eqref{Haction} that
\begin{equation}  \label{eq:omega}
\omega_f (M_{f\cdot w}) = \eta_{_{J(f)}} H_w, \qquad \text{ for } f \in \Ii^m_+, ~ w \in {}^{^{J(f)}}  \WBm. 
\end{equation}
By Lemma~\ref{lem:parabolic}(3) each $\eta_{_{J(f)}} \HBm^\LL$ is preserved by the involution $\bar{\ }$. 
Thanks to Proposition~\ref{KLB} and the identification \eqref{eq:VV}, the space $\VV^{\otimes m}_{\LL}$ admits an  $\LL$-basis
\[
\Big\{c_{f\cdot w} := \omega_f^{-1}(C_w^{J(f)}) ~\big |~f \in \Ii^m_+, ~ w \in  {}^{^{J(f)}}  \WBm \Big\}.
\]
We have the following main theorem of this section.

\begin{thm}
  \label{thm:iCB=LB}
The $\imath$-canonical basis and the $\LL$-basis on $\VV^{\otimes m}_\LL$ coincide.
\end{thm}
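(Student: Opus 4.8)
The plan is to show that under the identification $\omega_f$ in \eqref{eq:VV}, the two bases satisfy the same characterization properties, so uniqueness forces them to agree. Fix $f \in \Ii^m_+$ and $w \in {}^{J(f)}\WBm$; we want to prove $b^\imath_{f\cdot w} = c_{f\cdot w}$. First I would observe that both sides lie in the summand $\bigoplus_{w' \in {}^{J(f)}\WBm} \Q(v) M_{f\cdot w'}$ of $\VV_\LL^{\otimes m}$: for $c_{f\cdot w}$ this is immediate from its definition, and for $b^\imath_{f\cdot w}$ it follows because the leading term $M_{f\cdot w}$ lies in this summand and every $g \prec f\cdot w$ has $\texttt{wt}(g) \equiv_\imath \texttt{wt}(f\cdot w)$ — one checks (this is the argument from \cite[proof of Theorem~5.8]{BW13}) that any $M_g$ appearing with nonzero coefficient in a $\psi_\imath$-invariant element with leading term $M_{f\cdot w}$ must in fact have $g = f\cdot w'$ for some $w' \in {}^{J(f)}\WBm$, since $H_0$ (equivalently $\mc{T}^{-1}$, via Lemma~\ref{lem:H0}) and the $H_i$ preserve each such summand and the bar involution respects the decomposition by Proposition~\ref{prop:samebar}.

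Next I would transport everything through the isomorphism $\omega_f$ to $\eta_{_{J(f)}}\HBm^\LL$. By Proposition~\ref{prop:samebar}, $\psi_\imath$ is compatible with the bar involution of $\HBm^\LL$; since $\omega_f(M_f) = \eta_{_{J(f)}}$ and $\eta_{_{J(f)}}$ is bar-invariant by Lemma~\ref{lem:parabolic}(3), the map $\omega_f$ intertwines $\psi_\imath$ on the left with the bar involution of $\eta_{_{J(f)}}\HBm^\LL$ on the right. Therefore $\omega_f(b^\imath_{f\cdot w})$ is a bar-invariant element of $\eta_{_{J(f)}}\HBm^\LL$. For the leading-term condition: $\omega_f(M_{f\cdot w}) = \eta_{_{J(f)}} H_w$ by \eqref{eq:omega}, and the defining condition $b^\imath_{f\cdot w} \in M_{f\cdot w} + \sum_{g\prec f\cdot w} v\Z[v] M_g$ translates, using the already-established fact that only $g = f\cdot w'$ with $w' \in {}^{J(f)}\WBm$ occur, into $\omega_f(b^\imath_{f\cdot w}) \in \eta_{_{J(f)}}\big(H_w + \sum_{w' \in {}^{J(f)}\WBm,\, w' < w} v\Z[v] H_{w'}\big)$. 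Here one must check that $g = f\cdot w' \prec f\cdot w$ with both $w,w' \in {}^{J(f)}\WBm$ implies $w' < w$ in the Bruhat order — this is the standard compatibility between the $\preceq$ order on $I^m$ and the Bruhat order on minimal coset representatives, as in \cite{BW13}. Thus $\omega_f(b^\imath_{f\cdot w})$ satisfies exactly the two conditions (1) and (2) of Proposition~\ref{KLB} that uniquely characterize $C_w^{J(f)}$, whence $\omega_f(b^\imath_{f\cdot w}) = C_w^{J(f)} = \omega_f(c_{f\cdot w})$, and applying $\omega_f^{-1}$ gives $b^\imath_{f\cdot w} = c_{f\cdot w}$.

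The main obstacle I anticipate is the bookkeeping in the first paragraph: verifying carefully that the $\psi_\imath$-invariant element $b^\imath_{f\cdot w}$, a priori only controlled by the partial order $\preceq$ on all of $I^m$, actually lives in the single $\HBm^\LL$-cyclic summand generated by $M_f$, and that the $\preceq$-order restricted there matches the Bruhat order on ${}^{J(f)}\WBm$. This requires combining the weight/$\imath$-weight analysis of \eqref{eq:order} with the explicit Hecke action \eqref{Haction} and Lemma~\ref{lem:H0} — in particular the fact that $M_f$ with $f \in \Ii^m_+$ is fixed by $\psi_\imath$ (noted after \eqref{eq:ipsi}), so that $\omega_f$ is genuinely bar-equivariant. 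Once this structural matching is in place, the identification of bases is a formal consequence of the uniqueness statements in Proposition~\ref{KLB} and Proposition~\ref{prop:iCBL} together with Remark~\ref{rem:1}.
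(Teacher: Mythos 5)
Your overall strategy (transport $b^\imath_{f\cdot w}$ through $\omega_f$ and invoke the uniqueness of Proposition~\ref{KLB}) runs in the opposite direction from the paper, which instead checks that $c_{f\cdot w}$ is $\psi_\imath$-invariant and lies in $M_{f\cdot w}+\sum_{g} v\Z[v]M_g$, and then applies the order-free uniqueness of Remark~\ref{rem:1}; the paper's direction is chosen precisely so that no comparison of partial orders is ever needed. Your first step is essentially sound: since $\texttt{wt}(g)\equiv_\imath \texttt{wt}(f\cdot w)$ says exactly that $g$ and $f\cdot w$ have the same multiset of absolute values, any $g\prec f\cdot w$ lies in the orbit $f\cdot \WBm$, so $b^\imath_{f\cdot w}$ does live in the summand $\bigoplus_{w'\in{}^{J(f)}\WBm}\Q(v)M_{f\cdot w'}$ (this weight computation is the clean justification, rather than the argument via $H_0$ and the Hecke action preserving summands).

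The genuine gap is your claim that $f\cdot w'\prec f\cdot w$ with $w,w'\in{}^{J(f)}\WBm$ forces $w'<w$ in the Bruhat order. This is false: $\texttt{wt}(g)=\sum_i\varepsilon_{g(i)}$ depends only on the multiset of entries of $g$, so whenever $w$ and $w'$ differ by an element of the symmetric group part of $\WBm$ (no sign changes) we have $\texttt{wt}(f\cdot w)=\texttt{wt}(f\cdot w')$, hence $f\cdot w'\preceq f\cdot w$ and $f\cdot w\preceq f\cdot w'$ hold simultaneously, regardless of whether $w$ and $w'$ are Bruhat comparable. Thus $\preceq$ cannot deliver the condition $y<w$ appearing in Proposition~\ref{KLB}(2), and the appeal to the uniqueness statement of Proposition~\ref{KLB}, as stated, is unjustified. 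The argument can be repaired in two ways: either prove the Hecke-side analogue of Remark~\ref{rem:1}, namely that any bar-invariant element of $\eta_{_{J(f)}}\big(H_w+\sum_{y\in{}^{J(f)}\WBm}v\Z[v]H_y\big)$ already equals $C^{J(f)}_w$ (standard unitriangularity argument), or simply reverse the direction as the paper does: $C^{J(f)}_w$ is bar-invariant and $M_f$ is $\psi_\imath$-fixed, so by Proposition~\ref{prop:samebar} the element $c_{f\cdot w}$ is $\psi_\imath$-invariant and lies in $M_{f\cdot w}+\sum_\sigma v\Z[v]M_{f\cdot\sigma}$, whence $c_{f\cdot w}=b^\imath_{f\cdot w}$ directly by Remark~\ref{rem:1}.
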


\begin{proof}
By Proposition~\ref{KLB} and \eqref{eq:omega}, we have 
$c_{f\cdot w}  \in M_{f\cdot w} +\sum_{\sigma \in {}^{^{J(f)}}  \WBm} v \Z[v] M_{f\cdot \sigma}$,
for $f \in \Ii^m_+, ~ w \in  {}^{^{J(f)}}  \WBm$.
By Proposition~\ref{prop:samebar}, $b_{f\cdot w}^\imath $ is $\psi_\imath$-invariant.
By  the existence of $\imath$-canonical basis in Proposition~\ref{prop:iCBL} and 
the uniqueness in Remark~\ref{rem:1}, we must have
$c_{f\cdot w} = b_{f\cdot w}^\imath$. The theorem is proved. 
\end{proof}

\begin{example}\label{ex:iCBVV}
The $\imath$-canonical basis on $\VV_\LL$ is given as follows: for $i \in \Ii$ and $i >0$, 
\begin{align*}
&u_i, \quad u_{i \cdot s_0}, &\quad \text{ if } \LL(s_0)=0; \\
&u_i, \quad u_{i \cdot s_0} + v^{\LL(s_0)} u_i, &\quad \text{ if } \LL(s_0)> 0;\\
&u_i, \quad u_{i \cdot s_0} - v^{-\LL(s_0)} u_i, &\quad \text{ if } \LL(s_0)< 0.
\end{align*}
The above $\imath$-canonical basis on $\VV$ coincides with Lusztig's example for $\LL$-basis in \cite[\S5.5]{L03}.
\end{example}

\section{The $\jmath$-Schur duality with 2 parameters}
 \label{sec:Uj}

In this section, we establish the duality between a coideal subalgebra $\Uj$ of the quantum group for $\mathfrak{sl}_{2r+1}$ 
and the Hecke algebra of type $B$. This section is parallel to Sections~\ref{sec:Ui}--\ref{sec:CBLB}, 
and we shall omit many redundant details to avoid much repetition. 
We sometimes use the same notation in similar circumstances, as both cases are special cases for $\mathfrak{sl}_k$,
with $k={2r+1}$ here (and $k=2r+2$ in  Sections~\ref{sec:Ui}--\ref{sec:CBLB}).

\subsection{The quantum symmetric pair $(\U,\Uj)$}
Let $r$ be a positive integer. We set 
\begin{equation}  \label{eq:Ij}
\I  = \Big\{-r+\hf, -r+\frac{3}{2}, \dots, r- \hf \Big\},
\qquad
\I^\jmath = \Big\{\hf, \frac32, \dots, r-\hf \Big\}. 
\end{equation}
Let 
$
\Pi= \big \{\alpha_i=\varepsilon_{i-\hf}-\varepsilon_{i+\hf} \mid i \in \I \big \}
$
be the simple system of type $A_{2r}$, and $\Phi$ the associated root system. Denote the weight lattice by 
$$
\Lambda = \bigoplus_{i=-r}^r \Z \varepsilon_i.
$$  

Let 
\[
\U = \U_q(\mathfrak{sl}_{2r+1}) \otimes_{\Q(q)} \Q(p,q)
\] 
be the quantum group of type $A_{2r}$ over $\Q(p,q)$ with the standard generators $E_{i}$, $F_{i}$ and $K^{\pm 1}_{i}$ for $i \in \I$. 
We denote by $\psi: \U \to \U$ and $\Delta: \U \to \U \otimes \U$ be the bar involution and comultiplication on $\U$ 
given by the same formulas as in \eqref{eq:psi} and \eqref{eq:Delta}.

 Let $(\U,\Uj)$ be the quantum symmetric pair (cf. \cite{Le99}) over $\Q(p,q)$ with the following Satake diagram:
\begin{center}
\begin{tikzpicture}
\draw (-1.5,0) node {$A_{2r}:$};
 \draw[dotted]  (0.6,0) node[below]  {$\alpha_{-r+\hf}$} -- (2.4,0) node[below]  {$\alpha_{-\hf}$} ;
 \draw (2.6,0)
 -- (3.4,0) node[below]  {$\alpha_{\hf}$};
 \draw[dotted] (3.6,0) -- (5.4,0) node[below] {$\alpha_{r-\hf}$} ;
\draw (0.5,0) node (-r) {$\circ$};
 \draw (2.5,0) node (-1) {$\circ$};
\draw (3.5,0) node (1) {$\circ$}; 
\draw (5.5,0) node (r) {$\circ$};
\draw[<->] (-r.north east) .. controls (3,1) .. node[above] {$\tau$} (r.north west) ;
\draw[<->] (-1.north) .. controls (3,0.5) ..  (1.north) ;
\end{tikzpicture}
\end{center}

The $\Q(p,q)$-algebra $\Uj$ is the $\Q(p,q)$-subalgebra of $\U$ generated by (for $i \in \I^\jmath$)
 \begin{align}
  \label{eq:Uj}
  \begin{split}
\bk_{i} = K_{i}K^{-1}_{{-i}}, \quad 
&\be_{i} =  E_{i} + F_{{-i}}K^{-1}_{i}\,\, (i \neq \hf), \quad
\bff_{i} = E_{{-i}} + K^{-1}_{{-i}}  F_{i} \,\, (i \neq \hf),  \\
& \be_{{\hf}} =  E_{{\hf}} + p^{-1} F_{{-{\hf}}}K^{-1}_{{\hf}}, \quad
\bff_{{\hf}} = E_{{-{\hf}}} + p K^{-1}_{{-{\hf}}}  F_{{\hf}}.  
\end{split}
\end{align}

The $\Q(p,q)$-algebra $\Uj$ has the following presentation: it is generated by
$\be_i, \bff_i, \bk_i^{\pm 1}$ (for $i \in \I^\jmath$), 
subject to the following relations (for $i,j \in \I^\jmath$):
\begin{align*}
 \bk_{i} \bk_{i}^{-1} &= \bk_{i}^{-1} \bk_{i} =1,
 \quad
  \bk_{i} \bk_{ j} = \bk_{ j} \bk_{i}, \displaybreak[0]\\
 \bk_{i} \be_{ j} \bk_{i}^{-1} &= q^{(\alpha_i-\alpha_{-i}, \alpha_j)} \be_{j}, 
 \quad
  \bk_{i} \bff_{ j} \bk_{i}^{-1} = q^{-(\alpha_i-\alpha_{-i}, \alpha_j)} \bff_{j}, \displaybreak[0]\\
 \be_{i} \bff_{ j} -\bff_{ j} \be_{i} &= \delta_{i,j} \frac{\bk_{i}
 -\bk^{-1}_{i}}{q-q^{-1}}, \displaybreak[0]\\
 \be_{i} \be_{j} &= \be_{j} \be_{i}, \quad
  \bff_{i} \bff_{ j}  = \bff_{ j}  \bff_{i} \qquad  &\text{if } |i-j|>1, \displaybreak[0]\\
 \be_{i}^2 \be_{ j} +\be_{ j} \be_{i}^2 &= (q+q^{-1}) \be_{i} \be_{ j} \be_{i}, 
 \quad
  \bff_{i}^2 \bff_{j} +\bff_{ j} \bff_{i}^2 = (q+q^{-1}) \bff_{i} \bff_{ j} \bff_{i},
   & \text{if } |i-j|=1,\displaybreak[0]\\
\be_{ \hf}^2 \bff_{ \hf} + \bff_{ \hf} \be_{ \hf}^2 &= (q + q\inv)
  \Big(\be_{ \hf} \bff_{ \hf} \be_{\hf} - \be_{ \hf}\big(pq\bk_{ \hf} + p\inv q\inv \bk_{ \hf}\inv\big) \Big), \nonumber\\
\bff_{ \hf}^2 \be_{ \hf} + \be_{ \hf} \bff_{ \hf}^2 &= (q + q\inv)
  \Big(\bff_{ \hf} \be_{ \hf} \bff_{ \hf} - \big(pq\bk_\hf + p\inv q\inv \bk_\hf\inv \big)\bff_{\hf} \Big). \nonumber
\end{align*}
In contrast to the $\Ui$ case, the presentation of $\Uj$ depends on the parameter $p$.

The following counterpart of Lemma~\ref{lem:bar} follows from the above presentation.
\begin{lem}
There exists a unique $\Q$-algebra bar involution $\jbar$ on the algebra $\Uj$ such that $p \mapsto p\inv$, $q \mapsto q\inv$, $\bk_{i} \mapsto \bk_{i}\inv$, $\be_{i} \mapsto \be_{i}$, and $\bff_{i} \mapsto \bff_{i}$, for $i \in \I^\jmath$.
\end{lem}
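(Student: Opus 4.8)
The plan is to establish the existence and uniqueness of $\jbar$ directly from the presentation of $\Uj$ given just above the statement, exactly as in the proof of Lemma~\ref{lem:bar}. First I would observe that, since $\Uj$ is presented as the $\Q(p,q)$-algebra on generators $\be_i, \bff_i, \bk_i^{\pm 1}$ ($i \in \I^\jmath$) modulo the listed relations, it suffices to define a $\Q$-algebra homomorphism $\jbar$ from the free $\Q$-algebra on these generators (together with $p, q$) to $\Uj$ by the prescribed assignments $p \mapsto p\inv$, $q \mapsto q\inv$, $\bk_i \mapsto \bk_i\inv$, $\be_i \mapsto \be_i$, $\bff_i \mapsto \bff_i$, and to check that every defining relation is sent to a relation that already holds in $\Uj$. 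Uniqueness is then immediate, since the values on the generators are prescribed.

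The verification is relation-by-relation. The group-like relations $\bk_i\bk_i\inv = \bk_i\inv\bk_i = 1$ and $\bk_i\bk_j = \bk_j\bk_i$ are manifestly preserved. For the $\bk_i\be_j\bk_i\inv = q^{(\alpha_i-\alpha_{-i},\alpha_j)}\be_j$ relations (and their $\bff$-analogues), applying $\jbar$ replaces $\bk_i$ by $\bk_i\inv$, $\be_j$ by $\be_j$, and $q$ by $q\inv$, yielding $\bk_i\inv\be_j\bk_i = q^{-(\alpha_i-\alpha_{-i},\alpha_j)}\be_j$, which is exactly the original relation conjugated by $\bk_i\inv$; likewise for $\bff_j$. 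The commutator relation $\be_i\bff_j - \bff_j\be_i = \delta_{i,j}\frac{\bk_i-\bk_i\inv}{q-q\inv}$ is sent to $\be_i\bff_j - \bff_j\be_i = \delta_{i,j}\frac{\bk_i\inv-\bk_i}{q\inv-q}$, and the right-hand side equals the original since $\frac{\bk_i\inv-\bk_i}{q\inv-q} = \frac{\bk_i-\bk_i\inv}{q-q\inv}$. The Serre-type relations for $|i-j|>1$ and $|i-j|=1$ only involve the symmetric coefficient $q+q\inv$, which is fixed by $q \mapsto q\inv$, and the generators $\be, \bff$ which are fixed, so they are preserved. The only relations requiring genuine attention are the two deformed Serre relations at $i = \hf$ involving $\bt_{\hf}$-type terms; applying $\jbar$ sends $pq\bk_\hf + p\inv q\inv\bk_\hf\inv$ to $p\inv q\inv\bk_\hf\inv + pq\bk_\hf$, i.e. to itself, and $q+q\inv$ to itself, while $\be_\hf, \bff_\hf$ are fixed, so these relations too are preserved. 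Hence $\jbar$ descends to a well-defined $\Q$-algebra endomorphism of $\Uj$.

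Finally I would note that $\jbar$ is an involution: $\jbar^2$ fixes $p, q$ and all the generators $\be_i, \bff_i, \bk_i^{\pm 1}$, hence is the identity on $\Uj$; in particular $\jbar$ is bijective. This completes the proof.

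I do not expect any real obstacle here: the presentation of $\Uj$ has been arranged (in particular, the parameter $p$ enters only through the combination $pq\bk_\hf + p\inv q\inv\bk_\hf\inv$ in the deformed Serre relations, which is $\jbar$-stable) precisely so that this argument goes through verbatim as in Lemma~\ref{lem:bar}. The mild point to be careful about, compared with the $\Ui$ case, is that the $p$-dependence of the relations is now nontrivial, so one must check that $p \mapsto p\inv$ combines correctly with $q \mapsto q\inv$ and with the fixed $\bk_\hf^{\pm 1}$ in that one family of relations — but as indicated above this is a one-line check.
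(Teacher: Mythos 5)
Your proposal is correct and follows exactly the route the paper intends: the paper proves this lemma (like Lemma~2.1 for $\Ui$) simply by inspection of the presentation of $\Uj$, and your relation-by-relation check—including the key observation that $pq\bk_{\frac{1}{2}} + p^{-1}q^{-1}\bk_{\frac{1}{2}}^{-1}$ is fixed under $p\mapsto p^{-1}$, $q\mapsto q^{-1}$, $\bk_{\frac{1}{2}}\mapsto\bk_{\frac{1}{2}}^{-1}$—is precisely that inspection spelled out. The only cosmetic refinement would be to phrase the construction as extending the field automorphism of $\Q(p,q)$ given by $p\mapsto p^{-1}$, $q\mapsto q^{-1}$ (rather than working over a free $\Q$-algebra containing $p,q$), since the defining relations involve denominators such as $q-q^{-1}$.
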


Just as Proposition~\ref{prop:intetwiner} for $\Ui$, 
we have the intertwiner $\Upsilon \in \widehat{\U}$ between the involution $\psi$ on $\U$ and the involution  $\psi_\jmath$ on $\Uj$ such that
\[
\psi_\jmath(u) \cdot \Upsilon = \Upsilon \cdot \psi({u}), \qquad \forall u \in \Uj.
\]
This is a straightforward multiparameter variant of \cite[Theorem~6.4]{BW13} (cf. \cite{BK15, BW16}). 


Consider a $\Q(p,q)$-valued function $\zeta$ on $\Lambda$ such that ($\forall \mu \in \Lambda, \; i \in \Big\{ \hf, \ldots, r-\hf \Big \}$)
\begin{align}
  \label{eq:zeta2}
\begin{split}
\zeta (\mu+\alpha_i) &= -q^{(\alpha_i -\alpha_{-i}, \mu+\alpha_i)} \zeta (\mu),  \\
\zeta (\mu+\alpha_{-i}) &= -q^{(\alpha_{-i}, \mu+\alpha_{-i}) - (\alpha_{i}, \mu)} \zeta (\mu), \\
\zeta(\mu + \alpha_\hf) &= -p q^{(\alpha_\hf - \alpha_{-\hf},\mu + \alpha_\hf) - 1} \zeta(\mu), \\
\zeta(\mu + \alpha_{-\hf}) &= -p\inv q^{(\alpha_{-\hf}, \mu + \alpha_{-\hf}) - (\alpha_\hf, \mu) + 1} \zeta(\mu). 
\end{split}
\end{align}
Such $\zeta$ clearly exists. For any weight $\U$-module $M$, 
we obtain a $\Q(p,q)$-linear map $\widetilde{\zeta}: M \rightarrow M$ as in \eqref{eq:zetatd}.  
Let $w_0$ be the longest element of the Weyl group $W_{A_{2r}}$ and $T_{w_0}$ the associated braid group element.
The following multiparameter variant of \cite[Theorem~6.6]{BW13} holds by the same proof. 
\begin{prop}
For any finite-dimensional $\U$-module $M$, the composition map
$
\mc{T} := \Upsilon\circ \widetilde{\zeta} \circ T_{w_0}: M \longrightarrow M
$
is a $\Uj$-module isomorphism.
\end{prop}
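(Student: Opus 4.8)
The plan is to follow the proof strategy of \cite[Theorem~6.6]{BW13} verbatim, since the only differences from the equal-parameter case are the appearance of $p$ in the definition \eqref{eq:Uj} of $\be_{\hf}, \bff_{\hf}$ and the two extra terms in the normalization \eqref{eq:zeta2} governing $\zeta(\mu + \alpha_{\pm\hf})$; these were cooked up precisely to make the argument go through. First I would recall why $T_{w_0}$ alone is \emph{not} a $\Uj$-homomorphism: since $w_0$ sends $\alpha_i$ to $-\alpha_{-i}$ (i.e.\ $-\alpha_{\tau(i)}$) for the Satake diagram of $A_{2r}$, conjugation by $T_{w_0}$ essentially swaps $E_i \leftrightarrow F_{-i}$ and $E_{-i} \leftrightarrow F_i$ up to explicit scalars and $K$-factors coming from Lusztig's formulas for $T_{w_0}$ acting on root vectors; the diagonal correction $\widetilde\zeta$ is designed to absorb those $K$-factor scalars, and $\Upsilon$ then corrects the remaining discrepancy between $\psi$ and $\psi_\jmath$.

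The key steps, in order, are: (1) Compute $T_{w_0} E_i T_{w_0}^{-1}$ and $T_{w_0} F_i T_{w_0}^{-1}$ for each $i \in \I$, expressing them in terms of $E_{-i}, F_{-i}, K_{\pm i}$ with explicit $q$-powers — this is standard (Lusztig), but the $i = \pm\hf$ cases must be tracked carefully since that is where the $p$-asymmetry enters. (2) Show that $\widetilde\zeta \circ T_{w_0}$ intertwines the $\Uj$-action with a \emph{twisted} $\Uj$-action, or more precisely verify directly that for each generator $x \in \{\be_i, \bff_i, \bk_i^{\pm1}\}$ one has $(\widetilde\zeta \circ T_{w_0})(x \cdot m) = \psi_\jmath\big(\text{something}\big)\cdot (\widetilde\zeta \circ T_{w_0})(m)$ up to the bar discrepancy; concretely one checks that $\widetilde\zeta \circ T_{w_0} \circ x = \psi_\jmath(x)^{?} \circ \widetilde\zeta \circ T_{w_0}$ fails only by the action of $\psi$ versus $\psi_\jmath$, and that the two extra relations in \eqref{eq:zeta2} involving $p^{\pm1}$ and the shifts $\pm 1$ in the exponent are exactly what is needed to kill the $p$-scalars introduced by $\be_\hf = E_\hf + p^{-1}F_{-\hf}K_\hf^{-1}$ and $\bff_\hf = E_{-\hf} + pK_{-\hf}^{-1}F_\hf$. (3) Invoke the intertwiner property $\psi_\jmath(u)\Upsilon = \Upsilon\,\psi(u)$ for all $u \in \Uj$ to conclude that $\mc{T} = \Upsilon \circ \widetilde\zeta \circ T_{w_0}$ genuinely commutes with the $\Uj$-action on $M$, i.e.\ $\mc{T}(u\cdot m) = u\cdot \mc{T}(m)$. (4) Note $\mc{T}$ is invertible on a finite-dimensional $M$ since $T_{w_0}$ and $\widetilde\zeta$ are invertible and $\Upsilon$ acts as a unipotent (locally finite, upper-triangular) operator, hence $\mc{T}$ is a $\Uj$-module \emph{isomorphism}.

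I expect the main obstacle to be step (2) in the $i = \pm\hf$ cases: one must check that the specific combination of the $p^{\pm1}$ prefactors and the $q^{\mp 1}$ shifts in the last two lines of \eqref{eq:zeta2} precisely matches the $p$-twisted generators $\be_\hf, \bff_\hf$ of \eqref{eq:Uj}, rather than the untwisted ones. This is the genuinely new bilinear-form bookkeeping relative to \cite{BW13}, where $p = 1$ makes it invisible; the computation uses $(\alpha_\hf, \alpha_\hf) = 2$, $(\alpha_\hf, \alpha_{-\hf}) = 0$ when $r > 1$ (and a slightly different pairing when $r = 1$, which should be checked separately), and the commutation of $K$'s past the braid operator. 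Everything else — the existence of $\zeta$, the existence and intertwining property of $\Upsilon$, invertibility — is either already recorded in the excerpt or an immediate transcription of the corresponding statement in \cite{BW13}, so I would state those briefly and spend the detailed verification only on the half-integer generators.
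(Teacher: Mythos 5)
Your proposal follows essentially the same route as the paper, which proves this proposition simply by invoking the proof of \cite[Theorem~6.6]{BW13} with the modified normalization \eqref{eq:zeta2} chosen exactly to absorb the $p$-factors in the generators $\be_{\hf},\bff_{\hf}$ of \eqref{eq:Uj} — precisely your steps (1)--(4), with invertibility coming from the unipotence of $\Upsilon$ on a finite-dimensional weight module. One small correction to your bookkeeping remark: $\alpha_{\hf}=\varepsilon_0-\varepsilon_1$ and $\alpha_{-\hf}=\varepsilon_{-1}-\varepsilon_0$ are adjacent simple roots for every $r\ge 1$, so $(\alpha_{\hf},\alpha_{-\hf})=-1$ rather than $0$, though this does not affect the structure of the argument.
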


\subsection{The $(\Uj,\HBm)$-duality}

Let $\Ij = \I \pm \hf$. Let $\VV = \bigoplus_{a \in \Ij} \Q(p,q)u_a$ be the natural representation of $\U$. 
The $\U$-module structure of $\VV$ can be visualized as follows:
\begin{align}
\xymatrix{
u_{r} \ar@/_/[r]_{E_{ {r-\frac{1}{2}}}} & 
u_{r-1} \ar@/_/[l]_{F_{ {r-\frac{1}{2}}}} \ar@/_/[r]_{E_{ {r-\frac{3}{2}}}} & 
\cdots \ar@/_/[l]_{F_{ {r-\frac{3}{2}}}} \ar@/_/[r]_{E_{ {\frac{1}{2}}}} & 
u_{0} \ar@/_/[l]_{F_{ {\frac{1}{2}}}} \ar@/_/[r]_{E_{ {-\frac{1}{2}}}} & 
\cdots \ar@/_/[l]_{F_{ {-\frac{1}{2}}}} \ar@/_/[r]_{E_{ {-r+\frac{1}{2}}}} & 
u_{-r} \ar@/_/[l]_{F_{ {-r+\frac{1}{2}}}}}. \nonumber
\end{align}
We regard the $\U$-module $\VV^{\otimes m}$ as a $\Uj$-module by restriction.

Recall from \eqref{Mf} the element $M_f \in \VV^{\otimes m}$, for any $f \in \Ij^m$ (except that $I$ here is understood as in \eqref{eq:Ij}). 
The Weyl group $W_{B_m}$ acts on $\Ij^m$ in the obvious way. The Hecke algebra $\HBm$ acts on $\VV^{\otimes m}$ as follows:
\begin{align}   \label{Haction2}
\begin{split}
 M_f \cdot H_i &=
 \begin{cases}
 q^{-1}M_f, & \text{ if }  f(i) = f(i+1);\\
 M_{f \cdot  s_i}, & \text{ if }  f(i) < f(i+1);\\
 M_{f \cdot  s_i} + (q^{-1} - q) M_{f}, & \text{ if } f(i) > f(i+1);
 \end{cases}\\
 M_f \cdot H_0 &= 
 \begin{cases} 
  p^{-1} M_{f}, &\text{ if } f(1) = 0;\\
 M_{f \cdot s_0}, &\text{ if } f(1) > 0;\\
  M_{f \cdot  s_0} + (p^{-1} - p) M_{f}, & \text{ if } f(1) <0.
 \end{cases}
 \end{split}
\end{align}
Summarizing, we shall depict the actions of $\Uj$ and $\HBm$ on $\VV^{\otimes m}$ as
\begin{equation}  \label{LRact2}
\Uj \; \stackrel{\Phi}{\curvearrowright} \;
\VV^{\otimes m} \; \stackrel{\Psi}{\curvearrowleft}
\; \HBm.
\end{equation}

We fix $\zeta$ in \eqref{eq:zeta2} such that $\zeta(\epsilon_{-r}) = 1$. Then, we have
\begin{align}
\zeta(\epsilon_{-i}) = \begin{cases}
(-q)^{-r+i} \qu \text{if } i \neq 0,\\
 (-q)^{r}  p\qu \text{if } i = 0,
\end{cases} \nonumber
\end{align}
for all $i \in \{ -r, - r+1, \ldots, r \}$. 

\begin{lem}
 \label{lem:commute2}
The actions of $H_0$ and $\mc{T}^{-1}$on $\VV$ coincide.
\end{lem}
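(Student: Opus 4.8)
The plan is to mimic the proof of Lemma~\ref{lem:H0} in the even case, adapting the computation to the module $\VV$ for $\mathfrak{sl}_{2r+1}$ and to the extra case $f(1)=0$ present in the odd setting. First I would decompose $\VV$ into $\Uj$-eigenspaces. Guided by the defining relations for $\Uj$ and the explicit action on the basis vectors $u_a$, I expect a decomposition $\VV = \VV^+ \oplus \VV^-$, where the summands are spanned by suitable $\pm$-combinations $u_{-j} \pm c_j u_{j}$ for $j \in \{1, \ldots, r\}$ together with the fixed vector $u_0$ (which lies, say, in $\VV^+$), with scalars $c_j$ involving $p$ chosen so that each summand is $\bk_i$-, $\be_i$-, and $\bff_i$-stable. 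As in the even case, $u_0$ is special because $\be_{\hf}$ and $\bff_{\hf}$ carry the extra factors $p^{\mp 1}$ in \eqref{eq:Uj}; I would check by direct calculation that the natural combinations close up under all the generators, so that $\VV = \VV^+ \oplus \VV^-$ as $\Uj$-modules, each summand being a distinct simple $\Uj$-module.

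Next I would compute $\mc{T} = \Upsilon \circ \widetilde{\zeta} \circ T_{w_0}$ on $\VV$ explicitly. From the known longest-element braid action $T_{w_0}(u_{i}) = (-q)^{?}\, u_{-i}$ on the natural module of type $A_{2r}$, together with the chosen normalization $\zeta(\epsilon_{-r})=1$ and the displayed values $\zeta(\epsilon_{-i})$, I get that $\widetilde{\zeta}\circ T_{w_0}$ sends $u_i \mapsto u_{-i}$ for $i \neq 0$ and $u_0 \mapsto p\inv u_0$ (tracking the extra $p$ in $\zeta(\epsilon_0)$). Then I would identify the leading terms of $\Upsilon$ that act nontrivially on $\VV$ for weight reasons — here the relevant low-degree terms are $\Upsilon_{\alpha_{-\hf}}$ and $\Upsilon_{\alpha_{\hf}}$ (a multiparameter variant of the $\Upsilon_{\alpha_0}$ computation in Lemma~\ref{lem:H0}), which I expect to produce a correction proportional to $(p - p\inv)$ on the weight-$0$ part. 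Combining these, I would compute $\mc{T}\inv$ on each of $\VV^+$ and $\VV^-$, finding that it acts by the scalars $p\inv$ and $-p$ respectively (on the appropriate summands).

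Finally I would compare this with the $H_0$-action on $\VV$ read off from \eqref{Haction2}: on $u_0$ it is $p\inv$, and on the two-dimensional pieces $\Q(p,q)u_{-j} + \Q(p,q)u_{j}$ (for $j \geq 1$) the matrix of $H_0$ has eigenvalues $p\inv$ and $-p$ with eigenvectors exactly the $\pm$-combinations spanning $\VV^+$ and $\VV^-$. Since $\mc{T}\inv$ acts by the same scalars on the same eigenspaces, the two operators agree on all of $\VV$, proving the lemma. The main obstacle I anticipate is bookkeeping: getting the scalars $c_j$ in the eigenvector combinations and the precise coefficient of the leading $\Upsilon$-term right so that $\mc{T}\inv$ and $H_0$ match not just spectrally but as operators; the weight-$0$ case (the new feature relative to Lemma~\ref{lem:H0}) is where the extra powers of $p$ in \eqref{eq:Uj} and \eqref{eq:zeta2} must be tracked carefully.
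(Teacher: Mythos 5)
Your strategy is the same as the paper's (decompose $\VV=\VV^+\oplus\VV^-$ into $\Uj$-submodules spanned by $u_0$ and the combinations $u_{-j}\pm p^{\pm 1}u_j$, compute $\mc{T}$ on the low-weight vectors via $T_{w_0}$, $\widetilde{\zeta}$ and the leading term of $\Upsilon$, and match with the $H_0$-eigenvalues, which you compute correctly). However, two of your concrete claims are wrong, and the first would make the argument fail at exactly the point you single out as the crux. From \eqref{eq:zeta2} with $\mu=\epsilon_1$ one gets $\zeta(\epsilon_0)=-pq\,\zeta(\epsilon_1)$, which with the normalization $\zeta(\epsilon_{-r})=1$ gives $\zeta(\epsilon_0)=p(-q)^{-r}$; since $T_{w_0}(u_0)=(-q)^{r}u_0$, this yields $\widetilde{\zeta}\circ T_{w_0}(u_0)=p\,u_0$, not $p^{-1}u_0$ as you claim. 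Every component $\Upsilon_\mu$ with $\mu\neq 0$ kills $u_0$ for weight reasons, so with your value you would get $\mc{T}^{-1}(u_0)=p\,u_0$, which does not equal $u_0\cdot H_0=p^{-1}u_0$ from \eqref{Haction2}: the lemma would then be false on the one vector that is new in the odd case.

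The second slip is the identification of the leading correction: the components $\Upsilon_{\alpha_{\hf}}$ and $\Upsilon_{\alpha_{-\hf}}$ vanish (nonzero $\Upsilon_\mu$ occur only for $\tau$-invariant $\mu$), and the first nontrivial term is $\Upsilon_{\alpha_\hf+\alpha_{-\hf}}=(p-p\inv)F_{\alpha_\hf}F_{\alpha_{-\hf}}$. Moreover this correction does not act on the weight-zero line at all: it sends $u_{-1}\mapsto u_{-1}+(p-p\inv)u_1$ and fixes $u_0$ and $u_1$. With these two points corrected, your computation closes exactly as in the paper: $\mc{T}(u_0)=p\,u_0$, $\mc{T}(u_{-1})=u_1$, $\mc{T}(u_1)=u_{-1}-(p\inv-p)u_1$, hence (using that $\mc{T}$ is a $\Uj$-module map and that $\VV^+$, $\VV^-$ are generated by $u_0$, respectively by $u_{-1}-p\inv u_1$) $\mc{T}$ acts by $p$ on $\VV^+$ and by $-p\inv$ on $\VV^-$, so $\mc{T}^{-1}$ acts by $p^{-1}$ and $-p$, matching the $H_0$-eigenvalues you found.
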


\begin{proof}
We define 
\begin{align*}
\VV^+ &=\bigoplus_{j\in \I^\jmath}\Q(p,q)( u_{-j-\hf}+pu_{j+\hf}) \bigoplus \Q(p,q) u_0,\\
\VV^- &= \bigoplus_{j\in \I^\jmath}\Q(p,q)( u_{-j-\hf}-p^{-1}u_{j+\hf}).
\end{align*}
 By direct calculations, we have, for $j \in \I^\jmath$, 
\begin{align}
\bff_{\alpha_{\frac{1}{2}}} \cdot u_0 &= u_{-1} + p u_1, \nonumber\\
\bff_{i} \cdot \big(u_{-j-\frac{1}{2}} + pu_{j+\frac{1}{2}}\big) 
&= \delta_{i, j+1} \cdot \big(u_{-(j+1)-\frac{1}{2}} + p u_{(j+1) + \frac{1}{2}}\big),\nonumber\\
%
%
\be_{i} \cdot \big(u_{-j-\frac{1}{2}} + pu_{j+\frac{1}{2}}\big) 
&= \delta_{i, j} \cdot \big(p^{-\delta_{\hf,i}}u_{-(j-1)-\frac{1}{2}} + p u_{(j-1) + \frac{1}{2}}\big), \nonumber\\
\bff_{i} \cdot \big(u_{-j-\frac{1}{2}} - p\inv u_{j+\frac{1}{2}}\big) 
&= \delta_{i, j+1} \cdot \big(u_{-(j+1)-\frac{1}{2}} - p\inv u_{(j+1) + \frac{1}{2}}\big), \nonumber\\
\be_{i} \cdot \big(u_{-j-\frac{1}{2}} - p\inv u_{j+\frac{1}{2}}\big) 
&= \delta_{i, j} \cdot \big(p^{-\delta_{\hf,i}}u_{-(j-1)-\frac{1}{2}} - p\inv u_{(j-1) + \frac{1}{2}}\big). \nonumber
\end{align}
Hence, $\VV =\VV^+ \oplus \VV^-$ as a $\Uj$-module. 
Furthermore,  $H_0$ acts as the scalar multiplication by $p^{-1}$ (resp., $-p$) on $\VV^+$ (resp., $\VV^-$). 

Since we have
$
T_{w_0}(u_{j}) = (-q)^{r-j} \cdot u_{-j},  
$
we obtain
\begin{align}
\widetilde{\zeta} \circ T_{w_0}(u_{j}) = \begin{cases}
u_{-j} \qu \text{if } j \neq 0,\\
p \cdot u_0 \qu \text{if } j = 0.
\end{cases} \nonumber
\end{align}
On the other hand, one computes the first term of $\Upsilon$
as $\Upsilon_{\alpha_\hf + \alpha_{-\hf}} = (p - p\inv)F_{\alpha_\hf} F_{\alpha_{-\hf}}$. Hence, we have
$
\mc{T}(u_0) = p u_0,  \mc{T}(u_1) = u_{-1} - (p\inv - p)u_1$, and $ \mc{T}(u_{-1}) = u_1, 
$ 
which imply that the action of $\mc{T}^{-1}$ on $\VV^+$ and $\VV^{-}$ are given by scalar multiplication 
by $p^{-1}$ and $-p$, respectively. The lemma follows.
\end{proof}

Now with the help of Lemma~\ref{lem:commute2}, we obtain the 
following counterpart of Theorem~\ref{thm:UiHB} by the same argument.
\begin{thm} [$\jmath$-Schur duality] 
The actions of $\Uj$ and $\HBm$ on $\VV^{\otimes m}$ \eqref{LRact2} commute and form double centralizers; that is, 
$$
\Phi(\Uj)=
\End_{\HBm}(\VV^{\otimes m}), \quad
\End_{\Uj}(\VV^{\otimes m})^{\operatorname{op}} =
\Psi(\HBm).
$$
\end{thm}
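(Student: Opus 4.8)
The plan is to mirror exactly the proof of Theorem~\ref{thm:UiHB} given above, substituting the $\jmath$-versions of every ingredient. First I would observe that $\Uj$ acts on $\VV^{\otimes m}$ as a subalgebra of $\U$, and that the subalgebra $\mathcal{H}_{A_{m-1}} \subseteq \HBm$ generated by $H_1,\dots,H_{m-1}$ already commutes with the full action of $\U$ by the type $A$ quantum Schur--Jimbo duality \cite{J86}; in particular it commutes with the action of $\Uj$. So the only new point for the commutation half is the interaction between $H_0$ (which acts on the first tensor factor only, via \eqref{Haction2}) and $\Uj$. Here I would invoke the $\jmath$-analogue of Proposition~\ref{prop:mcT}: the map $\mc{T}=\Upsilon\circ\widetilde{\zeta}\circ T_{w_0}$ is a $\Uj$-module automorphism of any finite-dimensional $\U$-module, hence so is $\mc{T}^{-1}$. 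Applying this to $\VV$ and using Lemma~\ref{lem:commute2}, which identifies the action of $H_0$ on $\VV$ with $\mc{T}^{-1}$, shows that $H_0$ acting on the first factor is a $\Uj$-module endomorphism of $\VV$, hence (using that $\Uj$ is a right coideal, $\Delta(\Uj)\subset\Uj\otimes\U$, so the $\Uj$-action on $\VV^{\otimes m}$ factors through $\Delta^{(m)}$ with the leftmost tensor slot carrying the $\Uj$-action) the operator $H_0\otimes\mathrm{id}^{\otimes(m-1)}$ commutes with the $\Uj$-action on $\VV^{\otimes m}$. This gives $\Psi(\HBm)\subseteq \End_{\Uj}(\VV^{\otimes m})^{\mathrm{op}}$ and $\Phi(\Uj)\subseteq \End_{\HBm}(\VV^{\otimes m})$.

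For the double centralizer property I would run the standard deformation argument, exactly as in \cite[Theorem~5.4]{BW13} and as invoked in the proof of Theorem~\ref{thm:UiHB}. Concretely: the algebras $\Uj$, $\HBm$ and the module $\VV^{\otimes m}$ are defined over $\Q(p,q)$ (indeed over a suitable localization of $\Z[p^{\pm1},q^{\pm1}]$), and the claim is known at the specialization $p=q$ (or $p=q=1$ after rescaling, where it reduces to the classical Schur--Weyl statement for the coideal of type AIII/D) by \cite{BW13}. Since the ranks of $\Phi(\Uj)$, $\Psi(\HBm)$, and the two endomorphism algebras are upper/lower semicontinuous in the parameters and the dimensions of $\End_{\Uj}(\VV^{\otimes m})$ and $\End_{\HBm}(\VV^{\otimes m})$ are bounded below by what they are at the special fibre, a dimension count forces the inclusions above to be equalities generically, hence over $\Q(p,q)$. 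One also needs that $\HBm$ acts faithfully on $\VV^{\otimes m}$ for $m\le \dim\VV=2r+1$, which again follows from the type $A$ faithfulness together with the explicit $H_0$-action, or simply by semicontinuity from the $p=q$ case.

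The main obstacle — such as it is — lies in checking Lemma~\ref{lem:commute2} itself, i.e. producing the correct two-parameter formulas for the $\Uj$-decomposition $\VV=\VV^+\oplus\VV^-$, for $T_{w_0}$ on $\VV$, and for the leading term $\Upsilon_{\alpha_\hf+\alpha_{-\hf}}=(p-p\inv)F_{\alpha_\hf}F_{\alpha_{-\hf}}$ of the intertwiner; but that lemma is already stated and proved in the excerpt, so in the proof of the theorem itself there is essentially no new computation: everything is assembled from Lemma~\ref{lem:commute2}, the $\jmath$-version of Proposition~\ref{prop:mcT}, the coideal property of $\Uj$, type $A$ duality, and the deformation argument. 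Thus I would write the proof as little more than the sentence ``with the help of Lemma~\ref{lem:commute2}, this follows by the same argument as Theorem~\ref{thm:UiHB}'', expanded to recall the two halves (commutation via $\mc{T}^{-1}$ and the coideal property; double centralizer via deformation from $p=q$) as above.
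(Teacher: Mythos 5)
Your proposal matches the paper's proof: the paper proves this theorem by exactly the same argument as Theorem~\ref{thm:UiHB}, namely type $A$ Schur--Jimbo duality for $H_1,\dots,H_{m-1}$, the coideal property together with the $\jmath$-analogue of Proposition~\ref{prop:mcT} and Lemma~\ref{lem:commute2} for $H_0$, and the deformation argument of \cite[Theorem~5.4]{BW13} for the double centralizer property. No substantive difference.
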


\subsection{The $\jmath$-canonical basis and $\LL$-basis}
 \label{sec:L}

 All the results in \S\ref{sec:iCB}--\ref{sec:iCB=LB} admit natural counterparts in the setting of $\Uj$. 
 The proofs are similar or easier in the $\Uj$ setting (e.g., the integrality of the intertwiner $\Upsilon$ is completely the same as in \cite{BW13}.)
 So we shall be brief.

Given a weight function $\LL: W_{B_m} \rightarrow \Z$, by a base change we have a $\Q(v)$-algebra 
\begin{equation*}
\Uj_\LL = \Uj \otimes_{\Q(p,q)} \Q(v).
\end{equation*}
Recall the $\U_\LL$-module $\VV^{\otimes m}$ admits a bar involution $\psi$ using the quasi-$R$-matrix $\Theta$ ({\cite[Chap.~ 4]{L94}}).
We define another anti-linear bar involution on the $\Uj_\LL$-module $\VV_\LL^{\otimes m}$ as 
\[
\psi_\jmath = \Upsilon \circ \psi.
\] 
Entirely similar to \cite{BW13}, we can establish the $\jmath$-canonical bases on  
finite-dimensional simple $\U$-modules and their tensor products.
In particular $\VV_\LL^{\otimes m}$ admits a $\jmath$-canonical basis (similar to Proposition~\ref{prop:iCBL}).
As in Proposition~\ref{prop:samebar}, we have compatible bar maps in the following sense:
for all $u \in \VV_\LL^{\otimes m}$,  $h \in \HBm^\LL$, and $x \in \Uj_\LL$, we have 
\begin{equation}  
\psi_\jmath(x u h) =\psi_\jmath(x) \, \psi_\jmath(u) \overline{h}. \nonumber
\end{equation}

We still define 
$\Ij^m_+$ as in \eqref{eq:Im}, and the decomposition of $\VV_\LL^{\otimes m}$
as a right $\HBm^\LL$-module as in \eqref{eq:VV}. Then $\VV_\LL^{\otimes m}$ admits a bar involution and 
an $\LL$-basis (inherited from $\HBm^\LL$).
Keep in mind again $\Ij^m_+$ and $\VV_\LL^{\otimes m}$ are slightly different from those in \S\ref{sec:iCB=LB}, because $I$ here is understood as in \eqref{eq:Ij}.
We have the following counterpart of Theorem~\ref{thm:iCB=LB}. 

\begin{thm}
The $\jmath$-canonical basis of $\VV_\LL^{\otimes m}$ is identical to the $\LL$-basis of $\VV_\LL^{\otimes m}$.
\end{thm}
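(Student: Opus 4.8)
The plan is to mirror exactly the argument used for Theorem~\ref{thm:iCB=LB} in the $\imath$-setting, since the $\jmath$-setting has been arranged (via the parallel constructions of \S\ref{sec:L}) to satisfy all the same structural inputs. First I would recall that, under the identification \eqref{eq:VV} adapted to $\Ij$ as in \eqref{eq:Ij}, the space $\VV_\LL^{\otimes m}$ decomposes as a right $\HBm^\LL$-module into summands isomorphic to $\eta_{_{J(f)}}\HBm^\LL$ for $f\in\Ij^m_+$, with $M_f\mapsto\eta_{_{J(f)}}$ and $M_{f\cdot w}\mapsto\eta_{_{J(f)}}H_w$ for $w\in{}^{J(f)}\WBm$. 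The key point here is that the Hecke algebra action \eqref{Haction2} on the $H_i$ ($i\ge 1$) is identical in form to \eqref{Haction}, and the $H_0$-action, while having an extra ``$f(1)=0$'' case, still behaves compatibly with the parabolic combinatorics of Lemma~\ref{lem:parabolic} (the index $0$ lies in $J(f)$ precisely when $f(1)=0$, and then $M_f\cdot H_0=p\inv M_f$ matches $\eta_{_{J(f)}}H_0=q_0\inv\eta_{_{J(f)}}$). Thus the $\LL$-basis $\{c_{f\cdot w}=\omega_f^{-1}(C_w^{J(f)})\mid f\in\Ij^m_+,\ w\in{}^{J(f)}\WBm\}$ is well-defined, and by Proposition~\ref{KLB} each $c_{f\cdot w}$ lies in $M_{f\cdot w}+\sum_{\sigma\in{}^{J(f)}\WBm}v\Z[v]M_{f\cdot\sigma}$.

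Next I would invoke the $\jmath$-analogue of Proposition~\ref{prop:samebar} (stated in \S\ref{sec:L}): the bar involution $\psi_\jmath=\Upsilon\circ\psi$ on $\VV_\LL^{\otimes m}$ is compatible with the bar involution of $\HBm^\LL$, i.e.\ $\psi_\jmath(uh)=\psi_\jmath(u)\overline{h}$. This compatibility — whose proof rests on the $\jmath$-counterpart of Lemma~\ref{lem:commute2}, which identifies the $H_0$-action with $\mc T^{-1}$ — shows that each $\LL$-basis element $c_{f\cdot w}$, being the image under the bar-equivariant isomorphism $\omega_f^{-1}$ of the bar-invariant element $C_w^{J(f)}$, is itself $\psi_\jmath$-invariant. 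Then, by the existence of the $\jmath$-canonical basis $\{b^\jmath_{f}\mid f\in I^m\}$ (the $\jmath$-analogue of Proposition~\ref{prop:iCBL}) together with the uniqueness characterization (the $\jmath$-analogue of Remark~\ref{rem:1}: $b^\jmath_f$ is the unique $\psi_\jmath$-invariant element in $M_f+\sum_g v\Z[v]M_g$), we conclude $c_{f\cdot w}=b^\jmath_{f\cdot w}$ for all relevant $f,w$, hence the two bases coincide.

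The only genuinely new verifications — and hence the main (mild) obstacle — are the bookkeeping items specific to $k=2r+1$: checking that the extra case $f(1)=0$ in \eqref{Haction2} is correctly absorbed into the parabolic framework, and confirming that the $\jmath$-versions of Proposition~\ref{prop:iCBL}, Remark~\ref{rem:1}, and Proposition~\ref{prop:samebar} hold with the same proofs. Since \S\ref{sec:L} already asserts all of these (the integrality of $\Upsilon$ in the $\Uj$-case being ``completely the same as in \cite{BW13}'', and Lemma~\ref{lem:commute2} supplying the $H_0$--$\mc T^{-1}$ identification needed for bar-compatibility), no new computation is required beyond what has been set up; the proof is essentially a transcription of the proof of Theorem~\ref{thm:iCB=LB}.

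\begin{proof}
The proof is entirely parallel to that of Theorem~\ref{thm:iCB=LB}. Using the decomposition \eqref{eq:VV} adapted to $\Ij$ (with $I$ as in \eqref{eq:Ij}) together with the Hecke algebra action \eqref{Haction2} and Lemma~\ref{lem:parabolic}, we obtain, for $f\in\Ij^m_+$ and $w\in{}^{J(f)}\WBm$, the identity $\omega_f(M_{f\cdot w}) = \eta_{_{J(f)}}H_w$, and hence by Proposition~\ref{KLB} the $\LL$-basis element
$
c_{f\cdot w} := \omega_f^{-1}(C^{J(f)}_w) \in M_{f\cdot w} + \sum_{\sigma\in{}^{J(f)}\WBm} v\Z[v] M_{f\cdot\sigma}.
$
By the $\jmath$-counterpart of Proposition~\ref{prop:samebar} established in \S\ref{sec:L}, whose proof uses Lemma~\ref{lem:commute2}, the isomorphism $\omega_f$ intertwines the bar involution $\psi_\jmath$ on $\VV_\LL^{\otimes m}$ with the bar involution on $\eta_{_{J(f)}}\HBm^\LL$; since $C^{J(f)}_w$ is bar-invariant, so is $c_{f\cdot w}$. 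By the existence of the $\jmath$-canonical basis (the $\jmath$-analogue of Proposition~\ref{prop:iCBL}) and the uniqueness characterization (the $\jmath$-analogue of Remark~\ref{rem:1}), we conclude $c_{f\cdot w} = b^\jmath_{f\cdot w}$. The theorem follows.
\end{proof}
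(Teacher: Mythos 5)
Your proposal is correct and follows exactly the route the paper intends: the paper proves this theorem by declaring the argument parallel to Theorem~\ref{thm:iCB=LB}, and your write-up is precisely that transcription (decomposition \eqref{eq:VV} with $\Ij$, Lemma~\ref{lem:parabolic}, Proposition~\ref{KLB}, the $\jmath$-analogues of Propositions~\ref{prop:iCBL}, \ref{prop:samebar} and Remark~\ref{rem:1}, with bar-compatibility resting on Lemma~\ref{lem:commute2}). Your explicit check that the extra case $f(1)=0$ in \eqref{Haction2} is absorbed by the parabolic framework (since then $s_0\in J(f)$ and $M_f\cdot H_0=p\inv M_f$ matches $\eta_{_{J(f)}}H_0=q_0\inv\eta_{_{J(f)}}$) is the one genuinely new verification, and you handle it correctly.
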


\begin{example}\label{ex:j:iCBVV}
We have the following  $\jmath$-canonical basis for $\VV_\LL$ (for $1\le i \le r$): 
\begin{align*}
u_0,\quad u_i, \quad u_{i \cdot s_0}, &\quad \text{ for } \LL(s_0)=0; \\
u_0,\quad u_i, \quad u_{i \cdot s_0} + v^{\LL(s_0)} u_i, &\quad \text{ for } \LL(s_0)> 0;\\
u_0, \quad u_i, \quad u_{i \cdot s_0} - v^{-\LL(s_0)} u_i &\quad \text{ for } \LL(s_0)< 0.
\end{align*}
Again this example coincides with Lusztig's example in \cite[\S5.5]{L03}.
\end{example}


\end{document}